\newtheorem{Theorem}{Theorem}[section]
\newtheorem{Corollary}[Theorem]{Corollary}
\newtheorem{Definition}[Theorem]{Definition}
\newtheorem{Lemma}[Theorem]{Lemma}
\newtheorem{Remark}[Theorem]{Remark}
\newtheorem{Example}[Theorem]{Example}
\newtheorem{Proposition}[Theorem]{Proposition}
\begin{document}


\title{On the existence of a balanced vertex in geodesic nets with three boundary vertices}
\author
{Duc Toan Nguyen \thanks{Texas Christian University, Fort Worth, TX, USA (duc.toan.nguyen@tcu.edu)} }
\date{}
\maketitle
\begin{abstract}
    Geodesic nets are types of graphs in Riemannian manifolds where each edge is a geodesic segment. One important object used in the construction of geodesic nets is a balanced vertex, where the sum of unit tangent vectors along adjacent edges is zero. We prove the existence of a balanced vertex of a triangle (with three unbalanced vertices) on a general two-dimensional Riemannian surface when all angles measure less than $2\pi/3$, if the length of the sides of the triangle is not too large. This property is a generalization for the existence of the Fermat point of a planar triangle.
\end{abstract}
\section{Introduction}

In \cite{MR4574417}, the authors introduce \emph{geodesic nets} as certain types of graphs embedded in a Riemannian manifold. A geodesic net on a Riemannian manifold $M$ consists of a finite set $V$ of vertices and a finite set $E$ of edges, which are non-constant geodesics between vertices. The set $V$ contains two types of vertices: \emph{balanced} and \emph{unbalanced}. Unbalanced vertices are vertices of degree 1. Each balanced point $v$ has two properties:
\begin{enumerate}
    \item The sum of all unit tangent vectors in $T_v M$ from $v$ towards all adjacent vertices is $0$.
    \item By convention, $v$ must have degree $\geq 3$.
\end{enumerate}
Also, the authors in \cite{MR4574417} require that no edge connects two unbalanced vertices. 
\\

A geodesic net is a generalization of a solution for the \emph{Steiner Tree} problem: given a finite set of points, find the graph spanning all vertices with minimum total length. In the case of three given points, the problem is the called Fermat-Torricelli problem. In that case, given a triangle $ABC$ on an Euclidean  plane such that all its angles measure less than $2\pi/3$, the Steiner tree contains a point, called a Fermat point, which is connected to $A,B,$ and $C$ and has the three surrounding angles measure $2\pi/3$. The Fermat point here satisfies the property of a balanced point of a geodesic net with three vertices.
\\

In 2021, Parsch, one the authors in \cite{MR4574417}, investigated a property of a geodesic net with three unbalanced vertices in \cite[Theorem 1.2]{MR4310934}: Each geodesic net with 3 unbalanced vertices on the plane endowed with a Riemannian metric of non-positive curvature has at most one balanced vertex. This theorem gives an upper bound for the number of balanced vertices in a geodesic net with three unbalanced vertices. In \cite[Theorem 3.1.2]{MR4574417}, Parsch proves that a given geodesic net with three unbalanced vertices on a non-positively curved Riemannian $\mathbb{R}^2$ has exactly one balanced vertex.
\\

In this paper, we investigate some conditions of three unbalanced vertices (or a triangle) for the existence of a balanced vertex given on a general Riemannian surface, not only surfaces with non-positive curvature. To be more specific, let $M$ be a two-dimensional surface and $\mathcal{U}$ be a small neighborhood of a point on $M$ that is homeomorphic to an open disk in $\mathbb{R}^2$. We assume that given two points $A$ and $B$ in $\mathcal{U}$, there exists only one geodesic from $A$ to $B$ that lies inside $\mathcal{U}$, and it is the shortest curve connecting those points. Here is our main theorem showing sufficient conditions for the existence of a balanced point on surfaces with Gaussian curvature bounded above by a constant.

\begin{Theorem} \label{thm:positive}
Let $M$ be a Riemannian surface such that its Gaussian curvature is bounded above by $1/R^2$, for $R>0$. Let triangle $ABC$ on $M$ be given such that its three angles measure less than $2\pi/3$. If the maximum geodesic distance of two points in the domain of the triangle $ABC$ is less than $R\pi/2$, then there exists a balanced point.
\end{Theorem}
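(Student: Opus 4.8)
The plan is to generalize the variational characterization of the Fermat point. Let $D$ denote the closed region bounded by the triangle $ABC$ (so $\operatorname{diam}(D)<R\pi/2$), and define the total-distance function $f\colon D\to\mathbb{R}$ by $f(P)=d(P,A)+d(P,B)+d(P,C)$, where $d$ is geodesic distance on $M$. Since $D$ is compact and $f$ is continuous, $f$ attains its minimum at some $P_0\in D$. I would then show that $P_0$ lies in the interior of $D$ and that the first-order optimality condition at $P_0$ is precisely the balancing condition, so that $P_0$ is a balanced vertex of the geodesic net with unbalanced vertices $A,B,C$ and edges $P_0A,\,P_0B,\,P_0C$.

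For the first-order step I would use the standard fact that, for $X\neq P$, if there is a unique minimizing geodesic from $X$ to $P$ then $d(\,\cdot\,,X)$ is differentiable at $P$ with gradient equal to the unit vector at $P$ pointing away from $X$. Here the hypotheses do their work: since $K\le 1/R^2$, the Rauch comparison theorem puts the first conjugate point along any geodesic at distance at least $R\pi$, and since $\operatorname{diam}(D)<R\pi/2$ every minimizing geodesic between points of $D$ is far shorter than this; together with the assumed uniqueness of minimizing geodesics inside $\mathcal{U}$, this makes $d(\,\cdot\,,A)$, $d(\,\cdot\,,B)$, $d(\,\cdot\,,C)$ smooth near any point of $D\setminus\{A,B,C\}$. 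Consequently, at an interior minimizer $\nabla f(P_0)=0$; unwinding the gradients, the three unit vectors at $P_0$ pointing toward $A,B,C$ sum to zero. Three unit vectors in $T_{P_0}M$ with zero sum automatically make pairwise angles $2\pi/3$, so $P_0$ is balanced and has degree $3$.

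The crux is ruling out boundary minima. First, $P_0$ cannot be a vertex. Suppose $P_0=A$, and let $\gamma$ be the unit-speed geodesic leaving $A$ along the bisector of $\angle BAC$, which points into $D$ because $\angle BAC<\pi$. By the first variation of arc length the right derivative of $d(\gamma(t),B)$ at $t=0$ is $-\langle\gamma'(0),v_B\rangle$, and similarly for $C$, where $v_B,v_C$ are the unit tangents at $A$ along the edges $AB,AC$; meanwhile $d(\gamma(t),A)=t$ for small $t$. Hence the right derivative of $f$ along $\gamma$ equals $1-\langle\gamma'(0),\,v_B+v_C\rangle=1-2\cos\!\big(\tfrac12\angle BAC\big)<0$, using $\angle BAC<2\pi/3$; this contradicts minimality, and the same applies at $B$ and $C$. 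Second, $P_0$ cannot lie in the interior of an edge, say of $AB$: there the minimizing geodesic from $A$ to $P_0$ is a sub-arc of the edge, so $\nabla d(P_0,A)$ and $\nabla d(P_0,B)$ are opposite unit tangents to $AB$ and cancel, giving $\nabla f(P_0)=\nabla d(P_0,C)$, a unit vector pointing away from $C$. By uniqueness of geodesics in $\mathcal{U}$, the minimizing geodesic from $C$ to $P_0$ meets $\partial D$ only at its endpoints, so it runs through the interior of $D$ and exits $D$ at $P_0$; thus $\nabla d(P_0,C)$ points out of $D$ and $f$ strictly decreases as $P_0$ is pushed into $D$, again contradicting minimality. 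Hence $P_0$ is interior, and the second paragraph completes the proof.

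I expect the main obstacle to be the regularity and convexity bookkeeping that the hypotheses are tailored to guarantee: one must check that all minimizing geodesics involved --- the edges of the triangle, the geodesic from $C$ to a would-be edge minimizer, and the three geodesics from an interior minimizer to $A,B,C$ --- have length below $R\pi$ so that comparison forbids conjugate points, that they stay inside $\mathcal{U}$ and inside $D$ so that uniqueness applies, and that $D$ is genuinely geodesically convex so that ``moving into $D$'' is realized by an honest geodesic. This is exactly the step that breaks down for large triangles on a sphere, and it is where $\operatorname{diam}(D)<R\pi/2$ together with $K\le 1/R^2$ is essential; it is also where the argument runs parallel to the round-sphere case, so that estimates from the proof of Theorem~\ref{thm:sphere} can be reused.
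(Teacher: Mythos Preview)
Your approach is sound and takes a genuinely different route from the paper. You minimize the total-distance function $f(P)=d(P,A)+d(P,B)+d(P,C)$ and argue that the minimizer must be interior, whence the first-order condition yields the balancing equations. The paper instead never looks at $f$: for each $X\in\gamma_{B,C}$ it studies how the angle $m(\angle BYC)$ varies as $Y$ moves along $\gamma_{A,X}$, and the bulk of the proof of Theorem~\ref{thm:positive} is a Rauch-type comparison showing that $(\lVert J\rVert^2)'>0$ for the relevant Jacobi field, which forces this angle to be strictly increasing near the first point where it equals $2\pi/3$. That monotonicity gives continuity of the locus $\{Y_X\}$, and an Intermediate Value argument on this curve produces the balanced point. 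Your argument is closer to Steiner-tree theory and is arguably more conceptual; the paper's argument is more constructive (it pins the balanced point to an explicit curve) and isolates precisely which Jacobi-field inequality carries the curvature hypothesis.

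One step needs tightening. In the edge case you write that $\gamma_{C,P_0}$ ``meets $\partial D$ only at its endpoints, so it runs through the interior of $D$.'' The first clause is correct (uniqueness of geodesics in $\mathcal U$ does give it), but the implication is not: a path between two boundary points of a topological disk that avoids the rest of the boundary can just as well lie entirely in the exterior. If that happened, $\nabla d(P_0,C)$ would point \emph{into} $D$, the first-order decrease you want would fail, and a boundary minimum is not yet excluded. What is really needed is geodesic convexity of $D$---exactly the ``convexity bookkeeping'' you flag in your final paragraph---and that is where $K\le 1/R^2$ together with $\operatorname{diam}(D)<R\pi/2$ must be invoked (e.g.\ via CAT$(1/R^2)$ comparison), not merely the uniqueness assumption on $\mathcal U$. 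This is a fixable gap rather than a wrong strategy; indeed the paper's own construction also tacitly assumes that the geodesics $\gamma_{A,X}$ lie inside the triangle, so the convexity hypothesis is common to both approaches.
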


This theorem generalizes the result of the existence of the Fermat point in a triangle with three angles that measure less than $2\pi/3$ on a plane. A direct corollary of Theorem \ref{thm:positive} is the following.

\begin{Corollary}
    On any neighborhood with compact closure on a complete Riemannian surface, let $R>0$ be a positive number such that the curvature on that neighborhood is bounded above by $1/R^2$. Thus, if a triangle $ABC$ contained in the neighborhood has three angles that measure less than $2\pi/3$ and the maximum geodesic distance of two points in the domain of the triangle is less than $R\pi/2$, then there exists a balanced point.
\end{Corollary}

\textbf{Paper structure:} We summarize the structure of the paper as follows:
\begin{itemize}
    \item In \textbf{Section \ref{section:2}}, we present the preliminary background, including some definitions, notations, lemmas, and theorems that are used in the proofs of some results later.
    \item In \textbf{Section \ref{section:3}}, we prove Theorem \ref{thm:positive}.
    \begin{itemize}
        \item In \textbf{Subsection \ref{subsec:1}}, we first develop some ``warm-up'' results and construct the main condition for the existence of a balance point based on the increase of an angle; see Proposition \ref{prop:increase} and Proposition \ref{prop:condition_increase}.
        \item In \textbf{Subsection \ref{subsection:3.2}}, 
        we prove Proposition \ref{thm:non_positive}, which is the  non-positive-curvature case of the main theorem. This proposition has a simple and direct proof from the ``warm-up'' results above.
        \item In \textbf{Subsection \ref{subsection:3.3}}, we prove Proposition \ref{thm:sphere} in the case of round spheres and Theorem \ref{thm:positive} in the case of surfaces with curvature bounded from above. At the end of this subsection, we give one example of a triangle on a round sphere that has no balanced point and one example showing that our condition is not sharp, meaning that there exist triangles on positive-curvature surfaces that have angles greater than $2\pi/3$ and yet have a Fermat point.
    \end{itemize}
\end{itemize}

\textbf{Related Results:} The definition of a \emph{geodesic net} was first introduced in \cite{MR1343696}, where Hass and Morgan showed the existence of some specific types of geodesic nets on a 2-sphere. In \cite{MR1618690}, Heppes extended the previous results by showing that there exists a geodesic net with vertices of degree 3 or 4 partitioning the round 2-sphere into $n$ regions for any natural number $n$. These two papers were inspired by the famous result stated by Poincaré: There exists a simple geodesic on any positively curved 2-sphere. This result was proved in \cite{MR683167, MR1417428}. In \cite{MR44760}, Lyusternik and Fet proved their famous Lyusternik–Fet theorem, which states that there exists a closed geodesic on every compact Riemannian manifold. Another related question is whether every closed manifold has infinitely many periodic geodesics, which is investigated in \cite{MR246329,MR978076, MR1209957, MR2326944}. In \cite{MR4574417}, Nabutovsky and Parsch survey a wide range of recent problems and results related to geodesic nets, which builds a first step for researchers interested in working with this topic. In 2015, Memarian \cite{MR3418465} investigated the problem of the maximum number of balanced vertices in a critical graph, i.e. a planar geodesic net, before the work by Parsch \cite{MR4310934}. The connection between minimal networks and geodesic nets on different manifolds is also studied in \cite{MR3616371, MR4079990}. Recently, in \cite{MR4595313}, Chambers et. al. showed that a geodesic flower, which is a finite collection of geodesic loops based at point $p$ that satisfies the balancing condition, is a stationary geodesic net. In \cite{MR4688379}, Liokumovich and Staffa proved that for a generic Riemannian metric on a closed smooth manifold, the union of the images of all stationary geodesic nets forms a dense set. Following up on Poincaré's question, Dey \cite{dey2023existence} showed the existence of closed geodesics on certain non-compact Riemannian manifolds.\\

\textbf{Acknowledgement:} The author would like to thank Professor Ken Richardson for his dedicated mentorship and valuable discussions. 

\section{Preliminary Background} \label{section:2}
Let us introduce some notation and definitions. First, we use $\gamma_{A,B}$ to denote the unique geodesic from $A$ to $B$ in $\mathcal{U}$ (assuming the existence and uniqueness of the geodesic). We let $d(A,B)$ be the geodesic distance between $A$ and $B$; equivalently, it is the length of the shortest curve connecting the two points in $M$. Next, we define the angle between two geodesics.

\begin{Definition} Let $A,B,C$ be three points on $\mathcal{U}$. We define the \textbf{angle} between two geodesics $\gamma_{X,A},\gamma_{X,B}$ at $X$, denoted by $\angle AXB$, as the smallest angle (among two possible angles) between two tangent vectors of the geodesics $\gamma_{X,A}$ and $\gamma_{X,B}$ at $X$ on the tangent plane $T_X M$. Also, we denote $m (\angle AXB)$ as the \textbf{measure} of $\angle AXB$.
\end{Definition}

\begin{figure}[h!]
    \centering
    \begin{subfigure}{.45\textwidth}
  \centering
  \includegraphics[width=.8\linewidth]{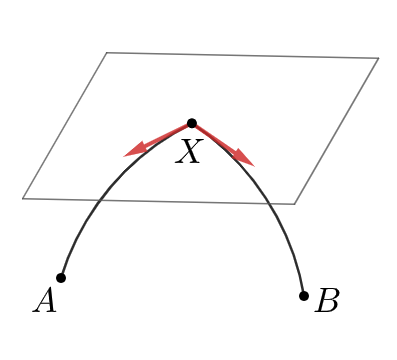}
  \caption{Angle $\angle AXB$}
  \label{fig:fig1a}
\end{subfigure}%
\begin{subfigure}{.45\textwidth}
  \centering
  \includegraphics[width=.8\linewidth]{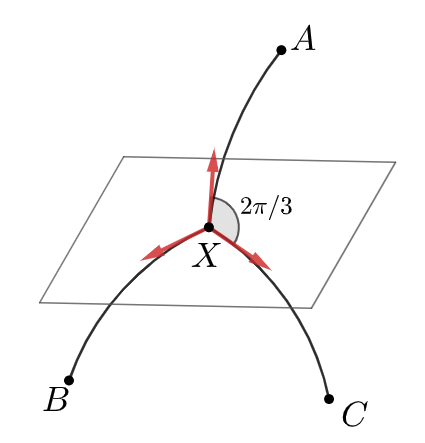}
  \caption{Balanced vertex}
  \label{fig:fig1b}
\end{subfigure}
    \caption{Angles}
    \label{fig:angle}
\end{figure}

\begin{Remark}
    With this definition, a \textbf{balanced vertex} $X$ corresponding to three vertices $A,B,C$ is a point lying inside the triangle $ABC$ (created by geodesics between pairs of vertices) such that
    $$m(\angle AXB) = m(\angle BXC) = m(\angle CXA) = \dfrac{2\pi}{3}$$
\end{Remark}

Now, we recall the Gauss-Bonnet Theorem for the case of a triangle and the Lebesgue number lemma. These facts are important for the main proofs below.

\begin{Theorem}[Gauss-Bonnet \cite{Kobayashi2019}] Given a smooth complete two-dimensional surface $M$,  let $A,B,C$ be three points on $M$. Connect points $A$ and $B$ with a directed smooth curve $\alpha_{A,B}$. Let the curves $\alpha_{B,C}$ and $\alpha_{C,A}$ be similarly constructed so that the three curves have empty intersections. We designate a specific domain bounded by three curves as $\mathcal{A}$. Denote the oriented boundary by $\partial \mathcal{A}$. Let $\iota_A, \iota_B, \iota_C$ be three interior angles at three vertices of the triangle. Then, we have
$$\int_{\mathcal{A}} K \, d\mathcal{A} + \int_{\partial \mathcal{A}} \kappa_g ds = \iota_A + \iota_B + \iota_C - \pi.$$
\end{Theorem}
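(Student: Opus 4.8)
The plan is to deduce this triangular form of Gauss--Bonnet from the local Cartan structure equation together with the Hopf \emph{Umlaufsatz} (the theorem of turning tangents). Because $\overline{\mathcal{A}}$ lies in a region homeomorphic to a disk, it is orientable and simply connected, so I can fix a smooth, positively oriented, orthonormal frame field $\{e_1,e_2\}$ on a neighborhood of $\overline{\mathcal{A}}$, with dual coframe $\{\omega_1,\omega_2\}$ and connection $1$-form $\omega_{12}$ determined by $\nabla e_1 = \omega_{12}\otimes e_2$. In this formalism the Gaussian curvature is characterized by the structure equation $d\omega_{12} = -K\,\omega_1\wedge\omega_2 = -K\,d\mathcal{A}$. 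Applying Stokes' theorem on $\mathcal{A}$ then gives
$$\int_{\mathcal{A}} K\,d\mathcal{A} \;=\; -\int_{\mathcal{A}} d\omega_{12} \;=\; -\int_{\partial\mathcal{A}}\omega_{12}.$$

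Next I would bring in the boundary term. Parametrize each of the three smooth arcs $\alpha_{A,B},\alpha_{B,C},\alpha_{C,A}$ by arclength, compatibly with the chosen orientation of $\partial\mathcal{A}$, and on each arc let $\varphi(s)$ be a continuous determination of the angle from $e_1$ to the unit tangent $\dot\gamma(s)$. The standard moving-frame identity for geodesic curvature reads $\kappa_g = \dfrac{d\varphi}{ds} + \omega_{12}(\dot\gamma)$, so summing over the three arcs,
$$\int_{\partial\mathcal{A}}\kappa_g\,ds \;=\; \sum_{\text{arcs}}\int \frac{d\varphi}{ds}\,ds \;+\; \int_{\partial\mathcal{A}}\omega_{12}.$$
The remaining ingredient is the theorem of turning tangents applied to the piecewise-smooth simple closed curve $\partial\mathcal{A}$: the total turning of the tangent direction along the smooth arcs, plus the sum of the exterior angles, equals $2\pi$. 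Writing $\theta_X = \pi - \iota_X$ for the exterior angle at vertex $X$ (well defined with a definite sign since each $\iota_X\in(0,\pi)$), this says $\sum_{\text{arcs}}\int \tfrac{d\varphi}{ds}\,ds = 2\pi - (\theta_A+\theta_B+\theta_C) = (\iota_A+\iota_B+\iota_C) - \pi$. Substituting this and the Stokes computation above, $\int_{\partial\mathcal{A}}\kappa_g\,ds = (\iota_A+\iota_B+\iota_C)-\pi-\int_{\mathcal{A}} K\,d\mathcal{A}$, which rearranges to the asserted identity. Equivalently, one may quote the general Gauss--Bonnet theorem for a compact surface with piecewise-smooth boundary and specialize to $\chi(\mathcal{A})=1$, replacing the corner terms by exterior angles; this is the same argument repackaged.

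I expect the genuinely hard step to be the theorem of turning tangents itself --- that the tangent direction of a \emph{simple} closed piecewise-smooth curve makes exactly one full rotation. This is the place where the global/topological content enters, and Hopf's proof (a homotopy argument for the secant map on the triangle of parameter pairs, degenerating to the tangent map on the diagonal) is substantially more subtle than the purely local differential-geometric bookkeeping around it. The only other points requiring care are routine: verifying orientability and simple-connectedness of $\overline{\mathcal{A}}$ so that a global frame exists, keeping the orientation of the three arcs consistent, and checking that the sign conventions in the structure equation and in the formula for $\kappa_g$ match so that the signs in the final combination work out.
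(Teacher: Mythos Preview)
Your proof sketch is the standard and correct derivation of the local Gauss--Bonnet formula for a curvilinear triangle via Cartan's structure equation $d\omega_{12}=-K\,d\mathcal{A}$, Stokes' theorem, the identity $\kappa_g=\varphi'+\omega_{12}(\dot\gamma)$, and the Hopf Umlaufsatz; there is no gap in the argument as outlined.

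However, there is nothing in the paper to compare it to: the paper does not prove this theorem at all. It is quoted in Section~\ref{section:2} (``Preliminary Background'') as a classical result, with a citation to the literature, and is used only as a tool in the proof of Theorem~\ref{thm:non_positive}. So your proposal is not an alternative to the paper's proof --- the paper simply has none --- but rather a self-contained justification of a background fact the authors take for granted.
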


\begin{Corollary}
    If $\alpha_{A,B}, \alpha_{B,C}, \alpha_{C,A}$ are three geodesics, then we obtain
    $$\int_{\mathcal{A}} K \, d\mathcal{A}  = \iota_A + \iota_B + \iota_C - \pi.$$
\end{Corollary}

\begin{Definition}
    Let $\mathcal{C}$ be an open cover of a metric space $X$. A \textbf{Lebesgue number} of the cover $\mathcal{C}$ is a positive real number $\delta$ such that for every subset $S$ of $X$ with diameter less than $\delta$, we have 
    $$S \subseteq C, \text{ for some $C \in \mathcal{C}$}.$$
\end{Definition}

\begin{Theorem}[The Lebesgue number lemma \cite{MR464128}] Every open cover of a compact metric space $(X,d)$ has a Lebesgue number.
\end{Theorem}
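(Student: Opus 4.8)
The plan is to argue by contradiction, extracting from a failure of the Lebesgue property a point that cannot be covered at all. So suppose $\mathcal{C}$ is an open cover of the compact metric space $(X,d)$ that has no Lebesgue number. Then for every $\delta > 0$ there is a set of diameter less than $\delta$ that is not contained in any single member of $\mathcal{C}$; taking $\delta = 1/n$ we obtain, for each $n \in \N$, a nonempty set $S_n \subseteq X$ with $\operatorname{diam}(S_n) < 1/n$ such that $S_n$ is contained in no element of $\mathcal{C}$. Pick a point $x_n \in S_n$ for each $n$.

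By compactness of $X$ (sequential compactness for metric spaces), the sequence $(x_n)$ has a subsequence $(x_{n_k})$ converging to some $x \in X$. Since $\mathcal{C}$ covers $X$, there is some $C \in \mathcal{C}$ with $x \in C$, and since $C$ is open there is $\varepsilon > 0$ with the open ball $B(x,\varepsilon) \subseteq C$. Now choose $k$ large enough that simultaneously $d(x_{n_k}, x) < \varepsilon/2$ and $1/n_k < \varepsilon/2$. For any $y \in S_{n_k}$ we then have
$$d(y,x) \le d(y, x_{n_k}) + d(x_{n_k}, x) < \operatorname{diam}(S_{n_k}) + \varepsilon/2 < 1/n_k + \varepsilon/2 < \varepsilon,$$
so $y \in B(x,\varepsilon) \subseteq C$. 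Hence $S_{n_k} \subseteq C$, contradicting the choice of $S_{n_k}$ as a set contained in no member of $\mathcal{C}$. Therefore $\mathcal{C}$ must admit a Lebesgue number.

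The only real subtlety — the step I would be most careful about — is the passage from the abstract negation of "has a Lebesgue number" to the concrete sequence of bad sets $S_n$ with controlled diameters; once that is set up cleanly, the rest is the standard compactness-plus-triangle-inequality maneuver. One should also note the harmless edge case that if some $S_n$ is empty or a singleton it is trivially contained in any nonempty member of $\mathcal{C}$ (assuming $\mathcal{C}$ is nonempty, which it is since it covers the nonempty space $X$; if $X = \emptyset$ the statement is vacuous), so we may assume each $S_n$ is nonempty and the choice of $x_n$ is legitimate.
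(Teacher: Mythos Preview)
Your proof is correct; it is the standard contradiction argument via sequential compactness, and the triangle-inequality estimate is carried out cleanly. Note, however, that the paper does not actually supply a proof of this statement: the Lebesgue number lemma is quoted in Section~\ref{section:2} as background with a reference to \cite{MR464128}, so there is no in-paper argument to compare against. Your write-up would serve perfectly well as the omitted proof.
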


We will utilize Jacobi fields, which are fundamental objects in Riemannian geometry describing families of geodesics.

\begin{Definition} Let $\gamma:[0,a] \to M$ be a geodesic in a manifold $M$. A vector field $J$ along $\gamma$ is called a \textbf{Jacobi field} if it satisfies the Jacobi equation for all $t \in [0,a]$ 
\begin{align*}
    \frac{D^2J}{dt^2} + R(\gamma'(t),J(t))\gamma'(t) = 0,
\end{align*}
where $\dfrac{DJ}{dt}$ is the covariant derivative of the vector field $J$ along $\gamma$, and $R$ is the Riemann curvature operator of $M$. With the convention from \cite{do1992riemannian},
$$R(X,Y)Z = \nabla_Y \nabla_X Z - \nabla_X \nabla_Y Z + \nabla_{[X,Y]}Z,$$
for $X,Y,Z$ being any three vector fields and $\nabla$ being the Levi-Civita connection.
\end{Definition}

\begin{Lemma}
  In the case of a surface with Gaussian curvature $K$, if $J(t)$ is orthogonal to $\gamma'(t)$ for all $t$, then we can rewrite the Jacobi Equation as
\begin{align}\label{eqn:jacobi}
    \frac{D^2J}{dt^2} + KJ = 0.
\end{align}  
\end{Lemma}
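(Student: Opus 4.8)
\section*{Proof proposal}

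The plan is to use the fact that on a two-dimensional manifold the full Riemann curvature tensor is pinned down by a single scalar, the Gaussian curvature, and then to specialize the Jacobi equation $\frac{D^2 J}{dt^2} + R(\gamma'(t),J(t))\gamma'(t) = 0$ to this situation. Throughout I would parametrize $\gamma$ by arc length, so that $\langle \gamma'(t),\gamma'(t)\rangle \equiv 1$; this is harmless, since $\frac{d}{dt}\langle\gamma',\gamma'\rangle = 2\langle \tfrac{D\gamma'}{dt},\gamma'\rangle = 0$ makes $|\gamma'|$ constant along any geodesic.

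First I would record the algebraic structure of $R$ in dimension $2$. Fixing $p\in M$ and a $g$-orthonormal basis $\{e_1,e_2\}$ of $T_pM$, the symmetries of the $(0,4)$-curvature tensor (antisymmetry in the first and in the last pair of slots, pair symmetry, and the first Bianchi identity) leave exactly one independent component, $\langle R(e_1,e_2)e_1,e_2\rangle$, which by definition is the sectional curvature of the plane $T_pM$, i.e.\ the Gaussian curvature $K(p)$. Expanding an arbitrary triple of vectors in this basis then yields the pointwise identity
\begin{equation*}
R(X,Y)Z = K\big(\langle X,Z\rangle\, Y - \langle Y,Z\rangle\, X\big),
\end{equation*}
with the sign convention $R(X,Y)Z = \nabla_Y\nabla_X Z - \nabla_X\nabla_Y Z + \nabla_{[X,Y]}Z$ adopted in the excerpt (the sign is fixed by evaluating on $X=Z=e_1$, $Y=e_2$ and comparing with the definition of sectional curvature).

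Next I would substitute $X=Z=\gamma'(t)$ and $Y=J(t)$. Using $\langle\gamma',\gamma'\rangle = 1$ this gives
\begin{equation*}
R(\gamma',J)\gamma' = K\big(J - \langle J,\gamma'\rangle\,\gamma'\big) = K\,J^{\perp},
\end{equation*}
where $J^{\perp}$ is the component of $J$ orthogonal to $\gamma'$. Feeding this back into the Jacobi equation turns it into $\frac{D^2J}{dt^2} + K\,J^{\perp} = 0$. For the Jacobi fields that matter in this paper — the normal ones, measuring how nearby geodesics spread — one has $J=J^{\perp}$, so the equation becomes exactly $\frac{D^2J}{dt^2} + KJ = 0$; equivalently, writing $J=f(t)\,e(t)$ for a parallel unit normal field $e$ along $\gamma$, it collapses to the scalar ODE $f'' + Kf = 0$.

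I do not expect a genuine obstacle here: the content is just the dimension count for $R$ in dimension $2$ together with careful bookkeeping of the curvature sign convention. The only point worth a word of care is that the displayed identity is strictly an identity for the normal part of $J$: decomposing $J = J^{\perp} + h(t)\gamma'$, the Jacobi equation splits into $\frac{D^2 J^{\perp}}{dt^2} + KJ^{\perp} = 0$ and $h''=0$, so the tangential part is affine in $t$ and contributes $K\,h(t)\gamma'$ to $\frac{D^2J}{dt^2} + KJ$, which vanishes precisely when $J$ is normal (or $K\equiv 0$). Since all later applications use normal Jacobi fields, typically with $J(0)=0$, this is exactly the form needed, and I would state the lemma (or at least apply it) with that understanding.
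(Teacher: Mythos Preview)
Your argument is correct and is essentially the standard computation the paper defers to: the paper does not actually prove this lemma but simply cites Example~2.3, Chapter~5 of do~Carmo, where the identity $R(X,Y)Z = K(\langle X,Z\rangle Y - \langle Y,Z\rangle X)$ is used (there for constant sectional curvature, but it holds pointwise in dimension~$2$) to reduce the Jacobi equation to $\tfrac{D^2J}{dt^2}+KJ=0$. Your additional remark that the displayed equation is strictly an identity for the normal component of $J$ is accurate and, as you observe, harmless for the later applications, which use only normal Jacobi fields with $J(0)=0$.
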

This follows from \textbf{Example 2.3, Chapter 5} in \cite{do1992riemannian}. Now, denote $J'(t) := \frac{DJ}{dt}(t)$. From \textbf{Chapter 5} of \cite{do1992riemannian}, we have some properties for Jacobi Fields. 

\begin{Lemma}
    A Jacobi field is determined by its initial conditions $J(0)$ and $J'(0)$.
\end{Lemma}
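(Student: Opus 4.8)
The plan is to reduce the Jacobi equation to a linear system of ordinary differential equations along $\gamma$ and then invoke the standard existence--uniqueness theorem for such systems; this is the usual argument from Chapter 5 of \cite{do1992riemannian}. First I would fix a parallel orthonormal frame $\{e_1(t),\dots,e_n(t)\}$ along $\gamma:[0,a]\to M$, obtained by parallel-transporting an orthonormal basis of $T_{\gamma(0)}M$ (this is legitimate since parallel transport along a single curve is well defined and preserves inner products, so there is no holonomy obstruction). Writing $J(t)=\sum_{i=1}^n f_i(t)\,e_i(t)$ and using $\tfrac{De_i}{dt}\equiv 0$, we get $\tfrac{DJ}{dt}=\sum_i f_i'(t)e_i(t)$ and $\tfrac{D^2J}{dt^2}=\sum_i f_i''(t)e_i(t)$, so the Jacobi equation becomes
\begin{equation*}
\sum_{i} f_i''(t)\, e_i(t) + R\big(\gamma'(t),\ \textstyle\sum_j f_j(t)\,e_j(t)\big)\gamma'(t) = 0 .
\end{equation*}
Since $R(\gamma'(t),\cdot)\gamma'(t)$ is linear in its middle slot, taking components against the frame yields a linear second-order system $f_i''(t)+\sum_j a_{ij}(t)f_j(t)=0$ with smooth coefficients $a_{ij}(t)=\langle R(\gamma'(t),e_j(t))\gamma'(t),e_i(t)\rangle$. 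On a surface one may instead start directly from the scalar form \eqref{eqn:jacobi}, where the system decouples into $f_i''+K(\gamma(t))f_i=0$ and the argument is even more transparent.

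Next I would note that this second-order linear system on the compact interval $[0,a]$ is equivalent to a first-order linear system of dimension $2n$ in the unknowns $(f_1,\dots,f_n,f_1',\dots,f_n')$, to which the Picard--Lindel\"of theorem (or the dedicated global existence theorem for linear systems) applies: a solution is uniquely determined by its value at $t=0$. It then remains to check that this initial data is exactly what $J(0)$ and $J'(0)$ encode. Indeed $f_i(0)=\langle J(0),e_i(0)\rangle$, and because the frame is parallel, $f_i'(0)=\langle \tfrac{DJ}{dt}(0),e_i(0)\rangle=\langle J'(0),e_i(0)\rangle$. Hence if two Jacobi fields $J_1,J_2$ satisfy $J_1(0)=J_2(0)$ and $J_1'(0)=J_2'(0)$, their component functions solve the same linear system with the same initial conditions, so they agree identically, giving $J_1\equiv J_2$.

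There is no real obstacle here; the argument is routine once the translation into an ODE is set up correctly. The only two points that genuinely need care are (i) the existence of the global parallel orthonormal frame along $\gamma$, which lets the covariant derivatives become ordinary derivatives of the component functions, and (ii) the linearity of the curvature term $R(\gamma'(t),J(t))\gamma'(t)$ in $J(t)$, which is immediate from the tensorial nature of the Riemann curvature operator. I would also remark, for completeness, that the same reasoning shows the space of Jacobi fields along $\gamma$ is a vector space of dimension $2n$, parametrized linearly by the pair $(J(0),J'(0))\in T_{\gamma(0)}M\times T_{\gamma(0)}M$, a fact used implicitly when we later manipulate Jacobi fields to track how angles vary under deformation of the triangle.
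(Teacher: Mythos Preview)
Your argument is correct and is exactly the standard proof from Chapter~5 of \cite{do1992riemannian}: reduce the Jacobi equation to a linear second-order ODE system in a parallel orthonormal frame and invoke Picard--Lindel\"of. The paper itself does not supply a proof of this lemma at all; it simply quotes the result from \cite{do1992riemannian}, so your write-up in fact fills in what the paper leaves to the reference.
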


\begin{Remark}
    If $J(0) = 0$ and $\lVert J'(0) \rVert = 0$, then $J \equiv 0$.
\end{Remark}

\begin{Lemma} \label{prop: jacobi_property}
    Let $J$ be a Jacobi field along the geodesic $\gamma:[0,a] \to M$. Then,
    $$\langle J(t), \gamma'(t) \rangle = \langle J'(0),\gamma'(0) \rangle t + \langle J(0),\gamma'(0) \rangle,\quad \text{ for } t \in [0,a].$$
\end{Lemma}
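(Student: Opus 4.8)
The plan is to study the scalar function $g(t) := \langle J(t), \gamma'(t)\rangle$ on $[0,a]$, show that it is affine in $t$ by verifying $g'' \equiv 0$, and then recover the claimed expression by evaluating $g(0)$ and $g'(0)$.

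First I would differentiate once. Since the Levi-Civita connection is compatible with the metric and $\gamma$ is a geodesic (so $\frac{D\gamma'}{dt} \equiv 0$),
$$g'(t) = \left\langle \frac{DJ}{dt}(t), \gamma'(t) \right\rangle + \left\langle J(t), \frac{D\gamma'}{dt}(t) \right\rangle = \langle J'(t), \gamma'(t) \rangle.$$
Differentiating a second time in the same way gives $g''(t) = \langle J''(t), \gamma'(t)\rangle$. Now I would invoke the Jacobi equation in the general form $\frac{D^2 J}{dt^2} = -R(\gamma', J)\gamma'$ from the definition, so that $g''(t) = -\langle R(\gamma'(t), J(t))\gamma'(t), \gamma'(t)\rangle$. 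By the skew-symmetry of the Riemann curvature tensor in its last two arguments, $\langle R(X,Y)Z, W\rangle = -\langle R(X,Y)W, Z\rangle$, we get $\langle R(\gamma', J)\gamma', \gamma'\rangle = 0$; hence $g'' \equiv 0$ on $[0,a]$.

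It then follows that $g$ is affine, $g(t) = g'(0)\, t + g(0)$. Evaluating, $g(0) = \langle J(0), \gamma'(0)\rangle$, and from the first computation $g'(0) = \langle J'(0), \gamma'(0)\rangle$, which is exactly the asserted identity. The only point that requires a little care is that the reduced surface form $\frac{D^2 J}{dt^2} + KJ = 0$ of \eqref{eqn:jacobi} applies only to the component of $J$ orthogonal to $\gamma'$, so I would carry out the second-derivative step using the general Jacobi equation together with the curvature symmetry rather than the reduced form. I do not expect any genuine obstacle here; this is a standard manipulation, included because the conclusion will be used later to control the tangential part of variations of geodesics.
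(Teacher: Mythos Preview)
Your argument is correct and is exactly the standard proof: differentiate $g(t)=\langle J(t),\gamma'(t)\rangle$ twice, use that $\gamma$ is a geodesic and the curvature symmetry to get $g''\equiv 0$, and read off the affine form. The paper does not supply its own proof of this lemma but simply cites Chapter~5 of do~Carmo, where precisely this computation appears; so your approach coincides with the referenced one.
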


\begin{Corollary}
    If we have $J(0)=0$ and $\langle J'(0), \gamma'(0) \rangle = 0$, then we have $\langle J(t), \gamma'(t) \rangle = 0$ for all $t \in [0,a]$, i.e. the Jacobi fields are perpendicular to the tangent vector $\gamma'(t)$ along the geodesic $\gamma$.
\end{Corollary}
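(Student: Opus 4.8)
The plan is to obtain this as an immediate consequence of Lemma~\ref{prop: jacobi_property}. That lemma says precisely that the scalar function $f(t) := \langle J(t),\gamma'(t)\rangle$ is affine in $t$, namely $f(t) = \langle J'(0),\gamma'(0)\rangle\, t + \langle J(0),\gamma'(0)\rangle$. An affine function is identically zero exactly when both its constant term and its slope are zero. The hypothesis $J(0)=0$ forces the constant term $\langle J(0),\gamma'(0)\rangle$ to vanish; for the conclusion we also need the slope $\langle J'(0),\gamma'(0)\rangle$ to vanish, so the second hypothesis should be read as a condition on the covariant derivative $J'(0)$ (the literal statement $\langle J(0),\gamma'(0)\rangle = 0$ is already implied by $J(0)=0$, so it is the orthogonality $\langle J'(0),\gamma'(0)\rangle = 0$ that is doing the work). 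This is the natural normalization in our setting, where $J$ arises from a variation through geodesics issuing from a fixed point at constant speed, so that $J(0)=0$ and $\tfrac{d}{ds}\lvert\gamma_s'(0)\rvert^2 = 2\langle J'(0),\gamma'(0)\rangle = 0$. Granting this, $f \equiv 0$ on $[0,a]$, which is exactly the asserted orthogonality of $J$ to $\gamma'$.

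For a self-contained argument avoiding the explicit formula of Lemma~\ref{prop: jacobi_property}, I would differentiate $f$ directly: since $\gamma$ is a geodesic, $\tfrac{D\gamma'}{dt}=0$, hence $f'(t) = \langle J'(t),\gamma'(t)\rangle$ and $f''(t) = \langle \tfrac{D^2J}{dt^2},\gamma'(t)\rangle$. Plugging in the tensorial Jacobi equation $\tfrac{D^2J}{dt^2} = -R(\gamma',J)\gamma'$ and using the antisymmetry $\langle R(X,Y)Z,W\rangle = -\langle R(X,Y)W,Z\rangle$ with $Z=W=\gamma'$ gives $f'' \equiv 0$; so $f$ is affine with $f(0)=\langle J(0),\gamma'(0)\rangle$ and $f'(0)=\langle J'(0),\gamma'(0)\rangle$, and one finishes as above. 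Alternatively, from the scalar form~\eqref{eqn:jacobi} one gets $f'' = -Kf$, and then $f(0)=0$, $f'(0)=0$ force $f\equiv 0$ by uniqueness for linear second-order ODEs. I do not expect any genuine obstacle here; the only thing to be careful about is the bookkeeping of the two hypotheses so that both $f(0)$ and $f'(0)$ vanish at $t=0$.
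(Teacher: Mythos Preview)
Your approach is correct and matches the paper's: the corollary is stated immediately after Lemma~\ref{prop: jacobi_property} with no separate proof, so the intended argument is exactly the one-line substitution into the affine formula that you give. You are also right to flag that the second hypothesis as printed, $\langle J(0),\gamma'(0)\rangle = 0$, is redundant given $J(0)=0$ and that the operative condition is $\langle J'(0),\gamma'(0)\rangle = 0$; this is confirmed by the paper's own usage in the subsequent Proposition, where the relevant space of Jacobi fields is defined by $J(0)=0$ and $J'(0)\perp\gamma'(0)$.
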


\begin{Proposition}
    Let $\gamma:[0,a] \to M$ be a geodesic in $M$ with $\dim M = n$, and let $\mathcal{J}^{\perp}$ be the space of Jacobi fields with $J(0) = 0$ and $J'(0) \perp \gamma'(0)$. Let $\lbrace J_1(t), \cdots, J_{n-1}(t) \rbrace$ be a basis of $\mathcal{J}^{\perp}$. If $\gamma(t)$, $t\in (0,a]$, is not conjugate to $\gamma(0)$ (i.e. $J(t) \neq J(0) = 0$), then $\lbrace J_1(t), \cdots, J_{n-1}(t) \rbrace$ is a basis for the orthogonal complement $\textnormal{span}\lbrace \gamma'(t)\rbrace^\perp \subset T_{\gamma(t)}M$.
\end{Proposition}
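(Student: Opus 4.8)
The plan is to use two facts about Jacobi fields recorded above: such a field is uniquely determined by the pair $(J(0),J'(0))$, and Lemma~\ref{prop: jacobi_property} pins down $\langle J(t),\gamma'(t)\rangle$ as an affine function of $t$. The first gives a dimension count for $\mathcal{J}^\perp$; the second places each $J_i(t)$ inside $\operatorname{span}\{\gamma'(t)\}^\perp$; the non-conjugacy hypothesis then supplies the linear independence that upgrades ``lies in'' to ``is a basis of.''

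First I would set up the linear map $\Phi\colon \mathcal{J}^\perp \to \operatorname{span}\{\gamma'(0)\}^\perp \subset T_{\gamma(0)}M$ given by $\Phi(J)=J'(0)$ (the target is correct since $J\in\mathcal{J}^\perp$ means $J'(0)\perp\gamma'(0)$). It is injective because a Jacobi field with $J(0)=0$ and $J'(0)=0$ vanishes identically, and surjective because for any $w\perp\gamma'(0)$ there is a Jacobi field with $J(0)=0$, $J'(0)=w$. Hence $\dim\mathcal{J}^\perp=n-1$, so any basis $\{J_1,\dots,J_{n-1}\}$ has exactly $n-1$ elements and $\{J_1'(0),\dots,J_{n-1}'(0)\}$ is a basis of $\operatorname{span}\{\gamma'(0)\}^\perp$. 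Next, since each $J_i$ has $J_i(0)=0$ and $J_i'(0)\perp\gamma'(0)$, Lemma~\ref{prop: jacobi_property} gives $\langle J_i(t),\gamma'(t)\rangle=0$ for every $t\in[0,a]$, so $J_i(t)\in\operatorname{span}\{\gamma'(t)\}^\perp$, an $(n-1)$-dimensional subspace of $T_{\gamma(t)}M$.

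It then suffices to show that for each fixed $t_0\in(0,a]$ the vectors $J_1(t_0),\dots,J_{n-1}(t_0)$ are linearly independent; being $n-1$ independent vectors in the $(n-1)$-dimensional space $\operatorname{span}\{\gamma'(t_0)\}^\perp$, they would then form a basis of it. Suppose $\sum_i c_iJ_i(t_0)=0$. Since the Jacobi equation is linear, $J:=\sum_i c_iJ_i$ is a Jacobi field, with $J(0)=0$ and $J(t_0)=0$. Because $\gamma(t_0)$ is not conjugate to $\gamma(0)$, any Jacobi field vanishing at both $0$ and $t_0$ is the zero field, so $J\equiv0$, hence $\Phi(J)=\sum_i c_iJ_i'(0)=0$; since $\{J_i'(0)\}$ is a basis of $\operatorname{span}\{\gamma'(0)\}^\perp$, all $c_i=0$, and independence follows.

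I do not expect a real obstacle — this is a routine linear-algebra argument once the Jacobi-field dictionary is in place — but the one point worth flagging is the compatibility of the two notions of non-conjugacy: the hypothesis concerns all Jacobi fields with $J(0)=J(t_0)=0$, while the $J$ built above is automatically in $\mathcal{J}^\perp$ (indeed any Jacobi field with $J(0)=J(t_0)=0$ lies in $\mathcal{J}^\perp$, since $t\mapsto\langle J(t),\gamma'(t)\rangle$ is affine by Lemma~\ref{prop: jacobi_property} and vanishes at two distinct points, forcing $J'(0)\perp\gamma'(0)$). With that observation the argument closes cleanly.
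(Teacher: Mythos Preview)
Your argument is correct and is the standard proof of this fact. However, the paper does not supply its own proof of this proposition: it is listed in Section~\ref{section:2} among the preliminary results quoted from do~Carmo (``From Chapter 5 of \cite{do1992riemannian}, we have some properties for Jacobi Fields''), with no accompanying proof. So there is nothing in the paper to compare your approach against; your write-up simply fills in what the paper takes as known background. One minor remark: once you have shown $J:=\sum_i c_iJ_i\equiv 0$, you can conclude $c_i=0$ directly from the linear independence of the $J_i$ as elements of $\mathcal{J}^\perp$, without passing through $\Phi$ and the $J_i'(0)$; your detour is correct but unnecessary.
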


Finally, we introduce the \textbf{Index Lemma}, a fundamental lemma for proving comparison theorems in differential geometry.

\begin{Lemma}[The Index Lemma \cite{do1992riemannian}] \label{lemma:index} Let $\gamma: [0,a] \to M$ be a geodesic without conjugate points in the interval $(0,a]$. Let $J$ be a Jacobi field along $\gamma$, with $\langle J, \gamma' \rangle = 0$, and let $V$ be a piecewise differentiable vector field along $\gamma$, with $\langle V, \gamma' \rangle = 0$. Suppose that $J(0) = V(0) = 0$ and that $J(t_0) = V(t_0)$, for $t_0 \in (0,a]$. Then,
$$I_{t_0}(J,J) \leq I_{t_0}(V,V),$$
where
$$I_{t_0}(V,V) := \int_0^{t_0} (\langle V',V' \rangle - \langle R(\gamma',V)\gamma',V \rangle)dt.$$
\end{Lemma}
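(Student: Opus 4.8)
The plan is the standard one: use the absence of conjugate points to build a frame of Jacobi fields along $\gamma$, expand both $V$ and $J$ in it, and reduce the inequality to the positivity of an explicit integral. Let $\{J_1,\dots,J_{n-1}\}$ be a basis of the space $\mathcal{J}^\perp$ of normal Jacobi fields vanishing at $0$. By the Proposition above, $\{J_1(t),\dots,J_{n-1}(t)\}$ is a basis of $\operatorname{span}\{\gamma'(t)\}^\perp$ for every $t\in(0,t_0]$. Since $J$ is itself a normal Jacobi field with $J(0)=0$, we may write $J=\sum_i a_i J_i$ with constants $a_i$; since $V(t)\perp\gamma'(t)$ for all $t$, we may write $V(t)=\sum_i f_i(t)J_i(t)$ with $f_i$ piecewise differentiable on $(0,t_0]$. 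The goal is to show
$$I_{t_0}(V,V)-I_{t_0}(J,J)=\int_0^{t_0}\Big|\sum_i f_i'(t)J_i(t)\Big|^2\,dt\ \geq\ 0.$$

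First I would record the Lagrange-type identity $\langle J_i',J_j\rangle-\langle J_i,J_j'\rangle\equiv 0$: its $t$-derivative is $\langle J_i'',J_j\rangle-\langle J_i,J_j''\rangle=-\langle R(\gamma',J_i)\gamma',J_j\rangle+\langle R(\gamma',J_j)\gamma',J_i\rangle$, which vanishes by the symmetries of the curvature tensor, and the expression is $0$ at $t=0$ because $J_i(0)=J_j(0)=0$. Using the Jacobi equation in the form $R(\gamma',J_i)\gamma'=-J_i''$, a direct computation with $V=\sum_i f_iJ_i$ together with this symmetry collapses the integrand of the index form to
$$\langle V',V'\rangle-\langle R(\gamma',V)\gamma',V\rangle=\Big|\sum_i f_i'J_i\Big|^2+\frac{d}{dt}\Big\langle \sum_i f_iJ_i',\,V\Big\rangle.$$
Integrating from $0$ to $t_0$ and using $V(0)=0$ to kill the boundary term at $0$ gives $I_{t_0}(V,V)=\int_0^{t_0}\big|\sum_i f_i'J_i\big|^2\,dt+\big\langle \sum_i f_i(t_0)J_i'(t_0),\,V(t_0)\big\rangle$. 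Applying the same identity to $J$, whose coefficients $a_i$ are constant so that the integral term disappears, yields $I_{t_0}(J,J)=\langle J'(t_0),J(t_0)\rangle=\big\langle\sum_i a_iJ_i'(t_0),\,J(t_0)\big\rangle$.

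To conclude I would use the hypothesis $V(t_0)=J(t_0)$: expanding both in the basis $\{J_i(t_0)\}$ forces $f_i(t_0)=a_i$ for each $i$, so the two boundary terms above are equal and cancel in the difference, leaving exactly $I_{t_0}(V,V)-I_{t_0}(J,J)=\int_0^{t_0}\big|\sum_i f_i'J_i\big|^2\,dt\geq 0$, which is the assertion. (One also reads off the equality case: equality forces $\sum_i f_i'J_i\equiv 0$, and since the $J_i(t)$ are linearly independent on $(0,t_0]$ this means each $f_i$ is constant, hence $f_i\equiv a_i$ and $V\equiv J$.)

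The main obstacle is not the computation but the analytic bookkeeping at $t=0$, where all the $J_i$ vanish simultaneously, so that the decomposition $V=\sum_i f_iJ_i$ and the boundary term at $0$ need justification. The hard part will be showing that the $f_i$ extend continuously (indeed piecewise differentiably) to $t=0$: I would write $J_i(t)=t\,W_i(t)$ with $W_i$ smooth and $W_i(0)=J_i'(0)$, note that $\{W_i(0)\}$ is a basis of $\operatorname{span}\{\gamma'(0)\}^\perp$, and factor a $t$ out of $V$ on each smooth piece; the coefficients $f_i$ then solve a linear system whose coefficient matrix is invertible near $0$, hence extend across $t=0$. With this in hand, and taking the usual care at the finitely many corners of the piecewise-differentiable field $V$, the identity above and the argument go through verbatim.
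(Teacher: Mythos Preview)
The paper does not prove this lemma at all: it is stated with a citation to do~Carmo's textbook and used as a black box in the proof of Theorem~\ref{thm:positive}. Your proposal is precisely the standard proof found in that reference (expand $V$ in a basis of normal Jacobi fields vanishing at $0$, use the Wronskian identity $\langle J_i',J_j\rangle=\langle J_i,J_j'\rangle$, collapse the integrand to a perfect square plus an exact derivative, and match boundary terms via $V(t_0)=J(t_0)$), and it is correct, including your handling of the regularity issue at $t=0$ by factoring $J_i(t)=tW_i(t)$.
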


\begin{Remark}
    In the case of a surface with Gauss curvature $K$, we have
    \begin{align*}
        I_{t_0}(V,V) &= \int_0^{t_0} (\langle V',V' \rangle - \langle KV,V \rangle)dt \\
        &=\int_0^{t_0} (\langle V',V' \rangle - K \langle V,V \rangle)dt
    \end{align*}
\end{Remark}

\section{Existence of a balanced vertex in geodesic nets with three boundary vertices} \label{section:3}
On the plane, we recall the existence of a unique balanced vertex, as the Fermat point, of a triangle $ABC$ when the measures all three angles $\angle BAC$, $\angle ACB$, $\angle CBA$ of the triangle are less than $2\pi/3$. In this section, we investigate the existence of a balanced vertex corresponding to three points $A,B,C$ on neighborhood $\mathcal{U}$ on a general surface where all angles of triangle $ABC$ measure less than $2\pi/3$.

\subsection{General conditions for the existence of a balanced vertex} \label{subsec:1}
Using the notation of the previous section, we first prove the following.

\begin{Proposition} \label{prop:exist_120_AX}
    Let $X$ be a point lying on the geodesic $\gamma_{B,C}$ such that $X$ is different from $B$ and $C$. Then, there exists a point $Y$ between $A$ and $X$ (exclusively) on $\gamma_{A,X}$ such that $m(\angle BYC) = \dfrac{2\pi}{3}$ (Figure \ref{fig:exist_Y_X}).
\end{Proposition}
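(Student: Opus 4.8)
The plan is to use a continuity/intermediate-value argument on the function that measures $\angle BYC$ as $Y$ ranges over the geodesic segment $\gamma_{A,X}$. Let me parametrize $\gamma_{A,X}$ by $Y_s$, $s \in [0,1]$, with $Y_0 = A$ and $Y_1 = X$, and define $f(s) = m(\angle B Y_s C)$. The key structural facts I need are: (i) $f$ is continuous on the open interval $(0,1)$ — this follows from smooth dependence of geodesics on endpoints and the fact that for $Y_s$ strictly between $A$ and $X$, the points $B$, $Y_s$, $C$ are not collinear (since $X \ne B, C$ lies on $\gamma_{B,C}$ and the geodesic $\gamma_{A,X}$ meets $\gamma_{B,C}$ only at $X$ inside $\mathcal{U}$, by the uniqueness-of-geodesics assumption on $\mathcal{U}$), so the two geodesics $\gamma_{Y_s,B}$ and $\gamma_{Y_s,C}$ have well-defined distinct directions at $Y_s$; (ii) the limiting behavior at the two endpoints.

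For the endpoint $s \to 1$: as $Y_s \to X$ along $\gamma_{A,X}$, the geodesics $\gamma_{Y_s,B}$ and $\gamma_{Y_s,C}$ converge to the two halves of $\gamma_{B,C}$ emanating from $X$ in opposite directions, so $\angle B Y_s C \to \pi$. Hence $f(s) \to \pi > 2\pi/3$. For the endpoint $s \to 0$: as $Y_s \to A$, the angle $\angle B Y_s C$ converges to $\angle BAC = m(\angle BAC)$, which by hypothesis of the ambient setup (the triangle has all angles less than $2\pi/3$ — though note this Proposition as stated does not itself assume that; I will instead argue directly that the angle at $A$ subtended is less than $\pi$, and more carefully that $f$ takes a value below $2\pi/3$ somewhere). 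Actually the cleanest route: I only need $f$ to dip below $2\pi/3$ at some point of $(0,1)$. Near $s=0$ we have $f(s) \to m(\angle BAC)$; if $m(\angle BAC) < 2\pi/3$ we are done by the intermediate value theorem applied between a point near $0$ and a point near $1$. Since the standing assumption in this section is that all three angles of $ABC$ measure less than $2\pi/3$, this is exactly the situation, and I will invoke it.

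So the key steps, in order, are: (1) set up the parametrization $Y_s$ and the angle function $f(s)$; (2) verify $f$ is continuous on $(0,1)$ using non-collinearity of $B, Y_s, C$, which in turn rests on the uniqueness of geodesics in $\mathcal{U}$ guaranteeing $\gamma_{A,X} \cap \gamma_{B,C} = \{X\}$; (3) compute $\lim_{s\to 1^-} f(s) = \pi$ by the convergence of the subtending geodesics to the two opposite rays of $\gamma_{B,C}$ at $X$; (4) compute $\lim_{s\to 0^+} f(s) = m(\angle BAC) < 2\pi/3$; (5) apply the intermediate value theorem to get $s_0 \in (0,1)$ with $f(s_0) = 2\pi/3$, and set $Y = Y_{s_0}$.

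The main obstacle I anticipate is step (3): making rigorous that $\angle B Y_s C \to \pi$ as $Y_s \to X$. One must show that the unit tangent at $Y_s$ pointing toward $B$ converges to the unit tangent at $X$ pointing toward $B$ (along $\gamma_{X,B}$, which is the portion of $\gamma_{B,C}$), and likewise toward $C$; then since at $X$ those two tangents are exactly opposite (they are the two directions along the single geodesic $\gamma_{B,C}$), the angle tends to $\pi$. This convergence is a consequence of continuous dependence of geodesic segments (hence of their initial velocities) on their endpoints within $\mathcal{U}$, which holds under the standing assumptions, but it deserves an explicit sentence. A secondary subtlety is that $f$ is only defined and continuous on the open interval, so the intermediate value theorem must be applied on a closed subinterval $[\varepsilon, 1-\varepsilon]$ after choosing $\varepsilon$ small enough that $f(\varepsilon) < 2\pi/3 < f(1-\varepsilon)$.
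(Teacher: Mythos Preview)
Your proof is correct and follows essentially the same approach as the paper: both parametrize the geodesic $\gamma_{A,X}$, observe that the angle $\angle BYC$ is continuous with limiting values $m(\angle BAC)<2\pi/3$ at $Y=A$ and $\pi$ at $Y=X$, and apply the Intermediate Value Theorem. Your treatment is in fact more careful than the paper's, which simply asserts continuity and the endpoint values without discussing non-collinearity or the need to restrict to a closed subinterval.
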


\begin{figure}[h!]
    \centering
    \includegraphics[width = .45\textwidth]{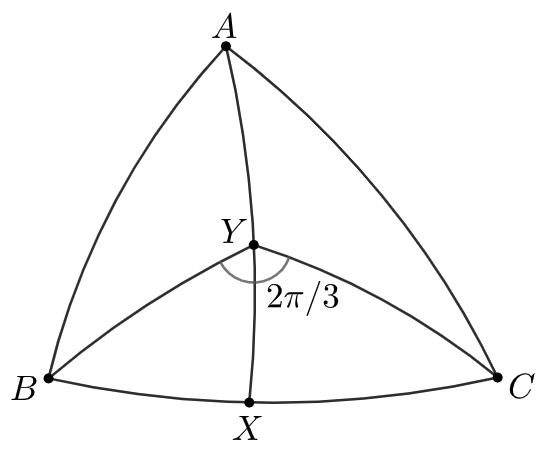}
    \caption{Existence of $Y$ with $m(\angle BYC)=\dfrac{2\pi}{3}$ for every $X$}
    \label{fig:exist_Y_X}
\end{figure}

\begin{proof}
    Since $X$ is on $\gamma_{B,C}$ and different from $B$ and $C$, then $m(\angle BXC) = \pi$. Let $Y$ be a moving point on $\gamma_{A,X}$ from $A$ to $X$ (inclusively). When $Y \equiv A$, we have $m(\angle BAC) <\dfrac{2\pi}{3}$; and when $Y \equiv X$, we have $m(\angle BXC) = \pi$. Also, since $M$ is a smooth surface, then $Y$ is moving continuously on $\gamma_{A,X}$ and $m(\angle BYC)$ changes continuously as $Y$ moves. By the Intermediate Value Theorem, there exists a point $Y'$ on $\gamma_{A,X}$ such that $m(\angle BY'C )= \dfrac{2\pi}{3}$.
\end{proof}

From this proposition, for each $X$ on $\gamma_{B,C}$, we denote $Y_X$ as the point that is closest to $A$ and $m(\angle BY_XC) = 2\pi/3$. Note that the existence of the unique point $Y_X$ closest to $A$ can be shown by the continuity of the angle function (i.e. the inverse image of a closed set is closed). Next, we propose a proposition for the existence of a balanced point based on the continuity of $Y_X$ as $X$ moves.

\begin{Proposition} \label{prop:increase}
    Let $\triangle ABC$ be a triangle on a neighborhood $\, \mathcal{U}$ of $M$ such that all angles measure less than $2\pi/3$. Let $X$ be a point lying on the geodesic $\gamma_{B,C}$ such that $X$ is different from $B$ and $C$. Let $Y_X$ be the closest point to $A$ such that $m(\angle BY_XC) = 2\pi/3$. Assume that the point $Y_X$ continuously varies inside $\triangle ABC$ as a function of $X$, as $X$ varies on $\gamma_{B,C}$. Then, there exists a point $T$ (balanced point) such that
    $$m(\angle ATB) = m (\angle BTC) = m(\angle CTA) = \dfrac{2\pi}{3}.$$
\end{Proposition}

\begin{figure}[h!]
    \centering
    \includegraphics[width=.55\textwidth]{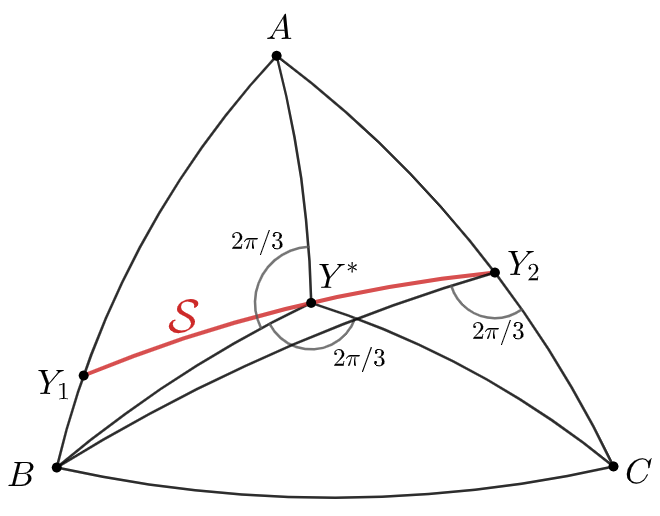}
    \caption{Existence of balanced point}
    \label{fig:main_theorem}
\end{figure}

\begin{proof}
    As $X$ moves continuously towards $B$, since $M$ is a smooth surface, then $\gamma_{A,X}$ also moves continuously towards $\gamma_{A,B}$. Then, since $Y_X$ varies continuously inside $\triangle ABC$, then $Y_X$ converges to a point $Y_1$ on $\gamma_{A,B}$ (Figure \ref{fig:main_theorem}). Also, since $Y_X$ is always between $A$ and $X$ (exclusively), then $Y_1$ is between $A$ and $B$ (inclusively). Similarly, as $X$ moves continuously towards $C$, $Y_X$ moves continuously towards $Y_2$ between $A$ and $C$ (inclusively) on $\gamma_{A,C}$. 

    From that, we have $Y_X$ moves on a continuous curve from $Y_1$ to $Y_2$ (exclusively), denoted by $\mathcal{S}$. Then, we will prove that there exists a balanced point $Y^{\ast}$ on $\mathcal{S}$.   
    \\
    
    First, we will show that $Y_2 \neq A$ and $m(\angle AY_2B) < \dfrac{2\pi}{3}$.
    \begin{itemize}
        \item \textbf{Case 1:} $Y_2 \neq C$. As $Y_X$ moves continuously on $M$ towards $Y_2$, the measure $m(\angle BY_XC)$ changes continuously. Then, $$m(\angle BY_2C) = \lim_{X \to C} m(\angle BY_XC) = \lim \dfrac{2\pi}{3} = \dfrac{2\pi}{3}.$$ Since $m(\angle BAC)<\dfrac{2\pi}{3}$, then $Y_2 \neq A$. Also, 
        $$m(\angle(AY_2B)) = \pi - m(\angle BY_2C) = \pi - \dfrac{2\pi}{3} = \dfrac{\pi}{3} < \dfrac{2\pi}{3}.$$
        \item \textbf{Case 2:} $Y_2 \equiv C$. Then, $Y_2 \equiv C \neq A$. Moreover, 
        $$m(\angle(AY_2B)) = m(\angle ACB) < \dfrac{2\pi}{3}.$$
    \end{itemize} 
    Thus, in both cases, we have $Y_2 \neq A$ and $m(\angle(AY_2B)) < \dfrac{2\pi}{3}$.
    \\
    
    Next, we will show that there exists a point $Y$ on $\mathcal{S}$ such that $m(\angle AYB) > 2\pi/3$. From that, by the Intermediate Value Theorem, there exists a balanced point on $\mathcal{S}$. Indeed, consider two cases:
    \begin{itemize}
        \item If $Y_1 \neq B$, then $m(\angle AY_1B) = \pi > \dfrac{2\pi}{3}$. We have $Y_X$ moves continuously on $\mathcal{S}$ from $Y_1$ to $Y_2$. In addition, $m(\angle AY_1B) > \dfrac{2\pi}{3}$, $m(\angle AY_2B) < \dfrac{2\pi}{3}$, and the measure $m(\angle AY_XB)$ is continuous (due to the smoothness of the manifold). By the Intermediate Value Theorem, there exists $Y^\ast$ between $Y_1$ and $Y_2$ on $\mathcal{S}$ such that $m(\angle AY^\ast B) = \dfrac{2\pi}{3}$. Thus, we have 
    $$m(\angle AY^\ast B) = m(\angle BY^\ast C) = m(\angle CY^\ast A) = \dfrac{2\pi}{3},$$
    whence $Y^*$ is a balanced point.
        \begin{figure}[h!]
            \centering
            \includegraphics[width = .55\textwidth]{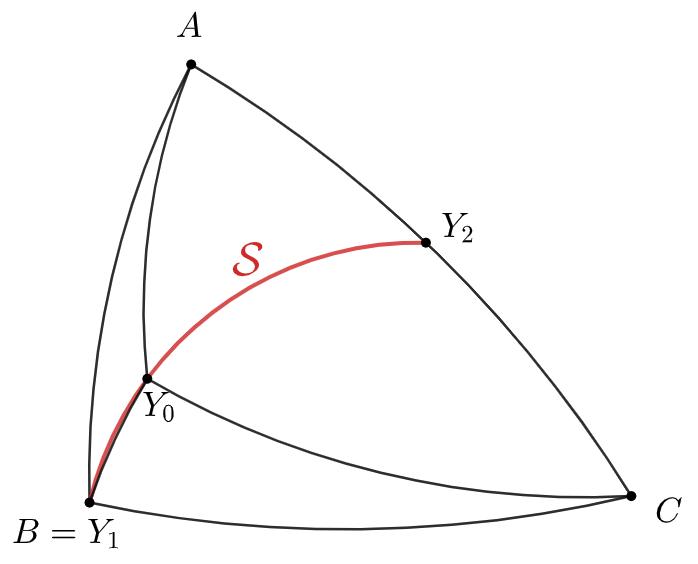}
            \caption{Existence of balanced point in case $Y_1 \equiv B$}
            \label{fig:main_theorem_Y_1_B}
        \end{figure}
        \item If $Y_1 \equiv B$ (Figure \ref{fig:main_theorem_Y_1_B}). We will show that the measure of the angle between the curve $\mathcal{S}$ and $\gamma_{B,C}$ at $B$ is $\pi/3$. As $Y_X$ approaches $B$ continuously, we have $\gamma_{Y_X,C}$ approaches $\gamma_{B,C}$, and they are nearly parallel. Let $\epsilon>0$. Then, there exists $Y_X$ on $\mathcal{S}$ such that $$m(\angle BY_XC) + m(\angle CBY_X) < \pi + \epsilon,$$ which means $$m(\angle CBY_X) = \pi + \epsilon - \dfrac{2\pi}{3} = \dfrac{\pi}{3} + \epsilon.$$ Thus, as $\epsilon \to 0$, we have the measure of the angle between curve $\mathcal{S}$ and $\gamma_{B,C}$ at $B$ is $\pi/3$. 
        
        Moreover, we have $m(\angle ABC) < 2\pi/3$. Denote $\Delta = 2\pi/3 - m(\angle ABC)>0$. When $Y_X$ tends to $Y_1 \equiv B$, we have $m(\angle Y_XBC)$ tends to $\pi/3$ and $(m(\angle ABY_X) + m(\angle AY_XB))$ tends to $\pi$. Let $\varepsilon>0$ such that $\varepsilon< \Delta$. Then, there exists $Y_0$  (Figure \ref{fig:main_theorem_Y_1_B}) on $\mathcal{S}$ such that
        \begin{itemize}
            \item[+)] $|m(\angle Y_0BC) - \pi/3| < \varepsilon/2$
            \item[+)] $|m(\angle ABY_0) + m(\angle AY_0B) - \pi|<\varepsilon/2$
        \end{itemize}
        Then,
        \begin{itemize}
            \item[+)] $\pi/3 - \varepsilon/2 < m(\angle Y_0BC)<\pi/3+\varepsilon/2$.
            \item[+)] $\pi - \varepsilon/2 < m(\angle ABY_0) + m(\angle AY_0B) < \pi + \varepsilon/2$.
        \end{itemize}
        Also, we have
        \begin{align*}
            m(\angle ABY_0) &= m(\angle ABC) - m(\angle Y_0BC) \\
                            &= 2\pi/3 - \Delta -m(\angle Y_0BC) \\
                            &<2\pi/3 - \Delta - \pi/3 +\varepsilon/2 \\
                            &= \pi/3 - \Delta +\varepsilon/2.
        \end{align*}    
        Thus,
        \begin{align*}
            m(\angle AY_0B) &> \pi - \varepsilon/2 - m(\angle ABY_0) \\
                            &> \pi - \varepsilon/2 - (\pi/3 - \Delta +\varepsilon/2) \\
                            &= 2\pi/3 - \varepsilon + \Delta \\
                            &> 2\pi/3.
        \end{align*}    
        Thus, we have $m(\angle AY_0B) > 2\pi/3$. Similarly to the case $Y_1 \neq B$, by the Intermediate Value Theorem, since $m(\angle AY_0B) >\dfrac{2\pi}{3}$ and $m(\angle(AY_2B)) < \dfrac{2\pi}{3}$, then there exists $Y^\ast$ between $Y_0$ and $Y_2$ on $\mathcal{S}$ such that $m(\angle AY^\ast B) = \dfrac{2\pi}{3}$. Thus, we also have $Y^\ast$ is a balanced point.
    \end{itemize}
    Therefore, from both cases, we have proved the existence of a balanced point $Y^\ast$.
    
\end{proof}

\begin{Remark}
    In this proposition, the continuity of the points $Y_X$ as a function of $X \in \gamma_{B,C}$ is crucial. This condition will change for different surfaces as the curvature changes. 
\end{Remark}
    To investigate this condition for the continuity of $Y_X$, we put forward an intermediate proposition.

\begin{Proposition} \label{prop:condition_increase}
    Given any $X$ on $\gamma_{B,C}$ and $Y$ moving on $\gamma_{A,X}$ from $A$ to $X$. If the measure $m(\angle BYC)$ is strictly increasing on a neighborhood near $Y_X$ for every $X$ as a function of $d(A,Y_0)$, then the point $Y_X$ is a continuous function of $X \in \gamma_{B,C}$.
\end{Proposition}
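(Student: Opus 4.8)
The plan is to recast the defining property of $Y_X$ as the first zero of a continuous function of two variables, and then to show that this ``first crossing'' location depends continuously on the parameter. Concretely, for $X$ on the open arc of $\gamma_{B,C}$ between $B$ and $C$, parametrize $\gamma_{A,X}$ by arc length from $A$ and let $Y(X,s)$ denote the point at distance $s$ from $A$ along $\gamma_{A,X}$. Since geodesics in $\mathcal{U}$ are unique and depend smoothly on their endpoints, and since the angle between $\gamma_{Y,B}$ and $\gamma_{Y,C}$ varies continuously with $Y$, the function $f(X,s) := m(\angle B\,Y(X,s)\,C)$ is jointly continuous in $(X,s)$ on its domain. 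By Proposition~\ref{prop:exist_120_AX}, $g(X) := \min\{\,s\ge 0 : f(X,s) = 2\pi/3\,\}$ is well defined, with $0 < g(X) < d(A,X)$, and $Y_X = Y(X,g(X))$. Hence it suffices to prove that $g$ is continuous, since then $Y_X$ is continuous as a composition of continuous maps.

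First I would establish lower semicontinuity of $g$, which does not require the monotonicity hypothesis. Fix $X_0$ and a sequence $X_n \to X_0$; the values $t_n := g(X_n)$ lie in a bounded interval (all the relevant points stay in a fixed compact subregion of $\mathcal{U}$), so some subsequence converges, $t_{n_k} \to t^\ast \ge 0$. Joint continuity gives $f(X_0,t^\ast) = \lim_k f(X_{n_k},t_{n_k}) = 2\pi/3$; since $f(X_0,0) = m(\angle BAC) < 2\pi/3$ we get $t^\ast > 0$, and minimality of $g(X_0)$ forces $t^\ast \ge g(X_0)$. As this holds for every convergent subsequence, $\liminf_{X\to X_0} g(X) \ge g(X_0)$.

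The heart of the argument, and the place the strict-monotonicity hypothesis enters, is upper semicontinuity. Write $s_0 = g(X_0)$. By minimality of $s_0$ together with the Intermediate Value Theorem, $f(X_0,s) < 2\pi/3$ for all $s\in[0,s_0)$; and by hypothesis $f(X_0,\cdot)$ is strictly increasing on some interval $(s_0-\rho,\,s_0+\rho)$, so $f(X_0,s_0+\eta) > 2\pi/3$ for every $\eta\in(0,\rho)$. Fix such an $\eta$, also requiring $\eta < s_0$ and $s_0+\eta < d(A,X_0)$. Then $f(X_0,s_0-\eta) < 2\pi/3 < f(X_0,s_0+\eta)$ with both inequalities strict, so by continuity of $f$ there is a neighborhood $N$ of $X_0$, which we may further shrink so that $s_0+\eta < d(A,X)$ there, on which $f(X,s_0-\eta) < 2\pi/3 < f(X,s_0+\eta)$. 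For each $X\in N$ the Intermediate Value Theorem applied to $f(X,\cdot)$ yields $s\in(s_0-\eta,\,s_0+\eta)$ with $f(X,s) = 2\pi/3$, hence $g(X)\le s < s_0+\eta$. Therefore $\limsup_{X\to X_0} g(X) \le s_0 = g(X_0)$. Combining the two estimates gives $\lim_{X\to X_0} g(X) = g(X_0)$, so $g$, and thus $X\mapsto Y_X = Y(X,g(X))$, is continuous.

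The only delicate point — and the reason the hypothesis cannot be dropped — is this last step: without strict monotonicity the graph of $f(X_0,\cdot)$ could merely touch the level $2\pi/3$ at $s_0$ from below, in which case nearby functions $f(X,\cdot)$ might fail to attain $2\pi/3$ anywhere near $s_0$, forcing $g$ to jump up to the next, possibly distant, crossing. Strict monotonicity guarantees a genuine transversal crossing of the level $2\pi/3$, and transversal crossings are stable under the small perturbations of $f$ induced by moving $X$. The remaining ingredients — joint continuity of $f$ from the smooth dependence of geodesics on endpoints in $\mathcal{U}$, and the compactness used for lower semicontinuity — are routine given the standing assumptions.
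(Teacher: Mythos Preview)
Your argument is correct. It rests on the same two observations the paper uses---that $m(\angle BYC)<2\pi/3$ strictly on the arc from $A$ up to $Y_{X_0}$, and that the monotonicity hypothesis pushes the angle strictly above $2\pi/3$ immediately past $Y_{X_0}$---but the packaging is different. The paper works geometrically in the surface: it builds an open tube $\mathcal{R}$ around $\gamma_{A,Y^-_\varepsilon}$ on which the angle stays below $2\pi/3$ (using a compactness/Lebesgue-number argument to get uniform thickness), and an open ball $\mathcal{B}$ around a point just past $Y_{X_0}$ on which the angle exceeds $2\pi/3$; then for $X$ near $X_0$ the geodesic $\gamma_{A,X}$ is trapped in $\mathcal{R}\cup B_\varepsilon(Y_{X_0})\cup\mathcal{B}$, forcing $Y_X\in B_\varepsilon(Y_{X_0})$. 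You instead parametrize by arc length, reduce to a first-crossing function $g(X)$ of a jointly continuous $f(X,s)$, and split continuity into lower semicontinuity (compactness of subsequential limits) and upper semicontinuity (transversal crossing of the level $2\pi/3$ is stable). Your route avoids the Lebesgue-number construction entirely and is cleaner; the paper's route is more pictorial and makes the role of the two regions explicit. One small technicality worth noting in your lower-semicontinuity step: you should observe that the domain $\{(X,s):0\le s\le d(A,X)\}$ is closed, so the limit point $(X_0,t^\ast)$ actually lies in the domain of $f$ before you invoke joint continuity there.
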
 

\begin{figure}[h!]
    \centering
    \begin{subfigure}{.9\textwidth}
  \centering
  \includegraphics[width=.7\linewidth]{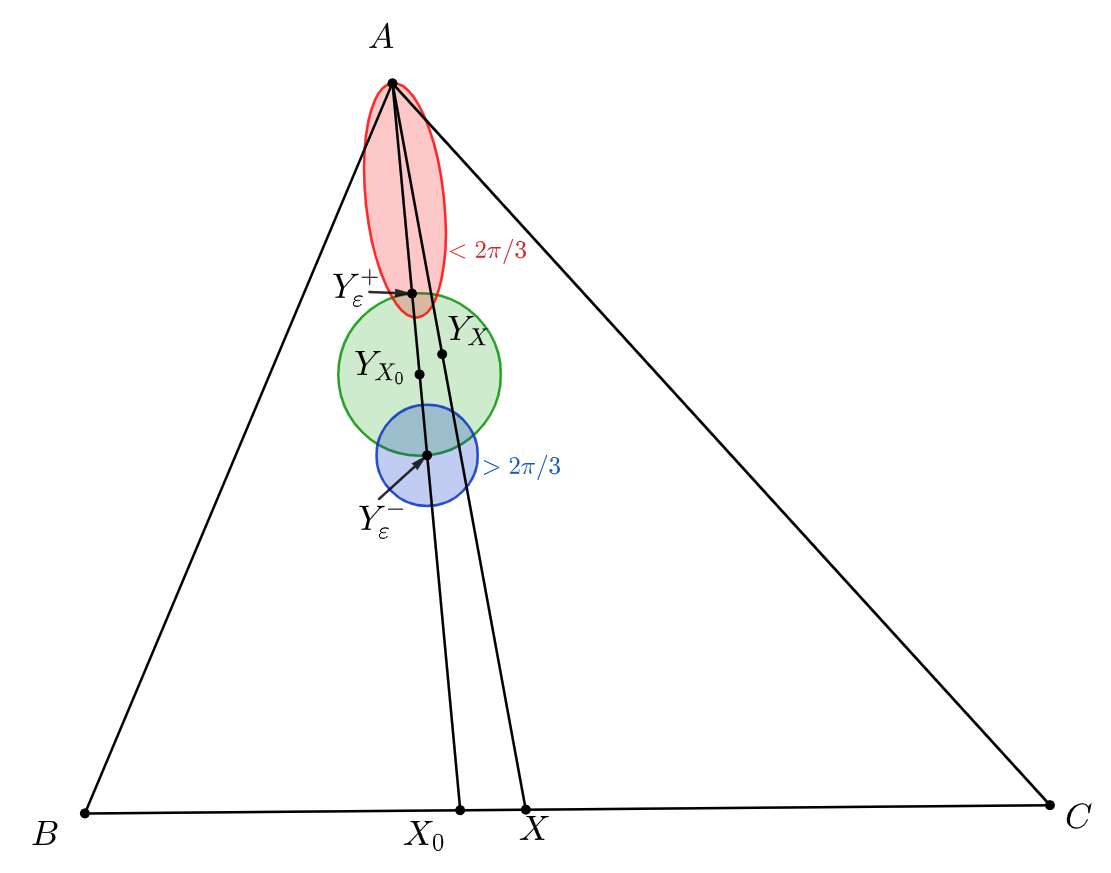}
  \caption{Neighborhoods around $Y_{X_0}$}
  \label{fig:prop_continuity_1}
\end{subfigure}%
\\
\begin{subfigure}{.9\textwidth}
  \centering
  \includegraphics[width=.85\linewidth]{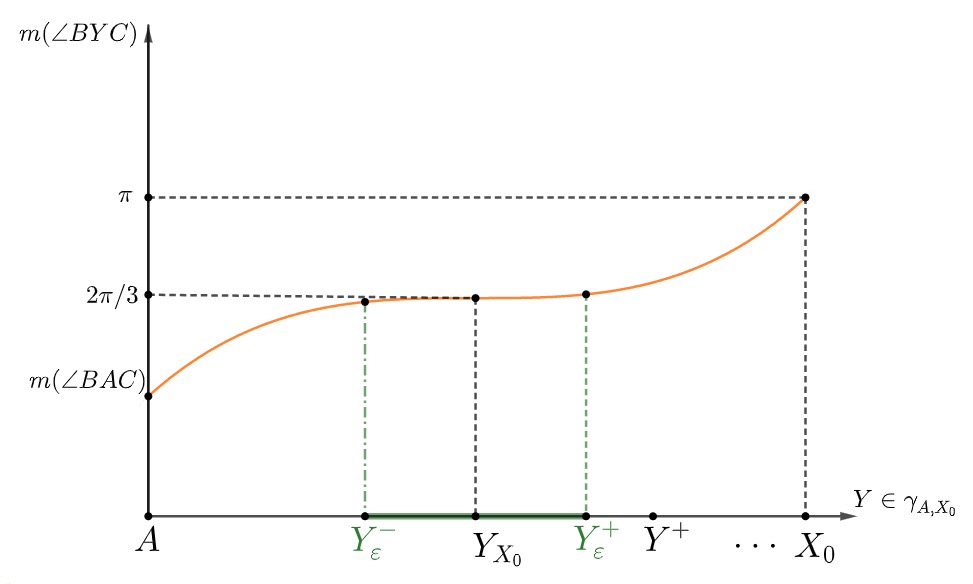}
  \caption{Graph for the angle function of $Y \in \gamma_{A,X_0}$}
  \label{fig:prop_continuity_2}
\end{subfigure}
    \caption{Condition for continuity of $Y_X$}
    \label{fig:prop-con}
\end{figure}
\begin{proof}
    In this proof, we call the function $m(\angle BYC)$ for $Y \in M$ the angle function. Let $X_0$ be a fixed point on $\gamma_{B,C}$ between $B$ and $C$ (exclusive), and define the point $Y_{X_0}$ corresponding to $X_0$. Based on the assumption, there exists a neighborhood around $Y_{X_0}$ such that the measure $m(\angle BYC)$ is strictly increasing as a function of $d(A,Y)$. Thus, there exists a point $Y^+$ between $Y_{X_0}$ and $X_0$ such that for all points $Y$ on $\gamma_{A,X_0}$ between $Y_{X_0}$ and $Y^+$ (exclusive), we have $m(\angle BYC)>\dfrac{2\pi}{3}$.

    Next, let $\varepsilon >0$, and assume that $\varepsilon < d(Y_{X_0},Y^+)$. Let $Y^-_{\varepsilon}$ be a point on $\gamma_{A,X_0}$ between $A$ and $Y_{X_0}$ such that $d(Y^-_{\varepsilon},Y_{X_0}) = \varepsilon$ (Figure \ref{fig:prop_continuity_1}). Similarly define the point $Y^+_{\varepsilon}$ between $Y_{X_0}$ and $Y^+$ such that $d(Y^+_{\varepsilon},Y_{X_0}) = \varepsilon$ (Figure \ref{fig:prop_continuity_1}). To have a better illustration for all points, we can see the graph for the angle function $m(\angle BYC)$ (vertical axis) with respect to the position of $Y$ on $\gamma_{A,X_0}$(horizontal axis) in Figure \ref{fig:prop_continuity_2}.

    We will show that there exists a distance $\delta_\varepsilon$ such that for all points $X$ on $\gamma_{B,C}$ such that $0 < d(X,X_0) < \delta_\varepsilon$, we have the corresponding $Y_X$ is inside the disk
    $$B_{\varepsilon}(Y_{X_0}) = \lbrace Y \in M \,|\, d(Y,Y_{X_0})<\varepsilon \rbrace$$

    First, notice that the angle function $m(\angle BYC)$ as $Y$ moves on the geodesic $\gamma_{A,Y_{\varepsilon}^-}$ is a continuous and closed map. Also, the geodesic $\gamma_{A,Y_{\varepsilon}^-}$ is also a compact set. Thus, the image of the angle function is also compact. By the Extreme Value Theorem, there exists the maximum $\alpha = m(\angle BY^\ast C)$ for $Y^\ast \in \gamma_{A,Y_{\varepsilon}^-}$. Also, $Y_{X_0}$ is the first point that $m(\angle BYC) = \dfrac{2\pi}{3}$ on $\gamma_{A,X_0}$. Then, for each $Y \in \gamma_{A,Y_{\varepsilon}^-}$, we have $ m(\angle BYC) \leq \alpha < \dfrac{2\pi}{3}$. Next, since the surface $M$ is smooth everywhere and the function $m(\angle BYC)$ is also continuous as a function of $Y$, then for each $Y \in \gamma_{A,Y_{\varepsilon}^-}$, there exists a \textcolor{red}{$\delta_Y^->0$} such that the angle function value is \textcolor{red}{less than $\dfrac{2\pi}{3}$} in the neighborhood \textcolor{red}{$B_{\delta_Y^-}(Y)$}. Let $\mathcal{A}^-$ be the family of open disks \textcolor{red}{$B_{\delta_Y^-}(Y)$}, for all $Y \in \gamma_{A,Y_{\varepsilon}^-}$. Then, $\mathcal{A}^-$ is an open covering of $\gamma_{A,Y_{\varepsilon}^-}$, which is a compact set. By the Lebesgue number lemma, $\mathcal{A}^-$ has a Lebesgue number \textcolor{red}{$\delta^-$}. Thus, for every $Y \in \gamma_{A,Y_{\varepsilon}^-}$, the angle function value is less than $\frac{2\pi}{3}$ in the open disk $B_{\frac{\delta^-}{2}}(Y)$ (i.e. the open disk with diameter $\delta^-$). From that, we take the union of all $B_{\frac{\delta^-}{2}}(Y)$ for all $Y \in \gamma_{A, Y_{\varepsilon}^-}$ to make a \textcolor{red}{\textbf{red area} $\mathcal{R}$} in Figure \ref{fig:prop_continuity_1}. From that, we have the area \textcolor{red}{$\mathcal{R}$} where every point there has the angle function value less than $\dfrac{2\pi}{3}$.

    Now, we construct an area around $Y_{\varepsilon}^+$ such that the angle function value is \textcolor{blue}{greater than $\dfrac{2\pi}{3}$}. Since the surface is smooth and the angle function is continuous, and $\widehat{Y_{\varepsilon}^+ B, Y_{\varepsilon}^+ C} > \dfrac{2\pi}{3}$, then there exists \textcolor{blue}{$\delta^+ > 0$} such that for each $Y \in B_{\delta^+}(Y_{\varepsilon}^+)$, we have $m(\angle BYC)>\dfrac{2\pi}{3}$. Then, we have $B_{\delta^+}(Y_{\varepsilon}^+)$ is the \textcolor{blue}{\textbf{blue area} $\mathcal{B}$} in Figure \ref{fig:prop_continuity_1}.

    Then, for all $X$ on $\gamma_{B,C}$ such that the geodesic $\gamma_{A,X}$ is totally inside the union of \textcolor{red}{$\mathcal{R}$}, \textcolor{blue}{$\mathcal{B}$}, and \textcolor{green}{$B_{\varepsilon}(Y_{X_0})$}, we have the point $Y_X$ is in $B_{\varepsilon}(Y_{X_0})$, due to the Intermediate Value Theorem (Figure \ref{fig:prop_continuity_1}). 
    \end{proof}

\begin{Remark}
    This proposition provides a condition for the continuity of $Y_X$ as of function of $X$ on $\gamma_{B,C}$, which is the strict increase of $m(\angle BYC)$ near $Y_X$ as a function of $d(A,Y)$. 
\end{Remark}
In the next subsections, we will investigate this condition for the continuity of $Y_X$ in two cases: on non-positive curvature surfaces and on arbitrary surfaces with curvature bounded from above.

\subsection{Existence of a balanced point on a surface with non-positive curvature}\label{subsection:3.2}

Next, we will show that in the case of a non-positive-curvature surface $M$. The condition from Proposition \ref{prop:condition_increase} is satisfied on any triangle on $M$ with no condition on the length of geodesics. Although this result is ultimately a corollary of Theorem \ref{thm:positive}, we include it because of its simplicity.

\begin{Proposition}\label{thm:non_positive}
    Let $M$ be a non-positive-curvature surface. Then, for any triangle on $M$ such that its three angles measure less than $2\pi/3$, there exists a balanced point.
\end{Proposition}  

\begin{figure}[h!]
    \centering
    \includegraphics[width = 0.45\textwidth]{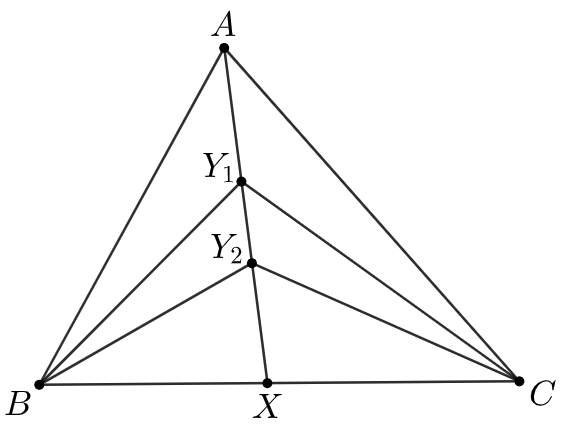}
    \caption{Existence of a balanced point in non-positive-curvature case}
    \label{fig:non-positve}
\end{figure}


\begin{proof}
    Denote $\mathcal{A}$ by the domain of a triangle $ABC$ in $M$. By the Gauss-Bonnet Theorem, we have
    $$\int_{\mathcal{A}} K d \mathcal{A}  = m(\angle BAC) + m(\angle CBA) + m(\angle ACB) - \pi.$$
    For a non-positive curvature surface, we have $K \leq 0$ everywhere. Then, $\int_{\mathcal{A}} K d \mathcal{A} \leq 0$ and 
    $$m(\angle BAC) + m(\angle CBA) + m(\angle ACB) \leq \pi.$$
    Let $X \in \gamma_{B,C}$, and let $Y_1, Y_2  \in \gamma_{A,X}$ such that $Y_1$ is between $A$ and $Y_2$ and $Y_1 \neq Y_2$ (Figure \ref{fig:non-positve}). For triangle $Y_1Y_2B$, 
    $$m(\angle BY_1Y_2) + m(\angle Y_2BY_1) + m(\angle Y_1Y_2B) \leq \pi.$$
    Then,
    \begin{align*}
        m(\angle BY_1X) &<   m(\angle BY_1X) + m(\angle Y_2BY_1) \quad (m(\angle Y_2BY_1) > 0) \\
                                & \leq  \pi - m(\angle Y_1Y_2B) \\
                                &= m(\angle BY_2X)
    \end{align*}
    Similarly, we have $m(\angle XY_1C) < m(\angle XY_2C)$. Thus,
    $$m(\angle BY_1C) = m(\angle BY_1X) + m(\angle XY_1C) <  m(\angle BY_2X)+ m(\angle XY_2C) = m(\angle BY_2C).$$
    From that, we have the measure $m(\angle BYC)$ increases when $Y$ moves from $A$ to $X$ on $\gamma_{A,X}$, for all $X \in \gamma_{B,C}$. Then, from Proposition \ref{prop:condition_increase}, there exists a balanced point in $\triangle ABC$.
\end{proof}

\subsection{Existence of a balanced point on a general surface} 
\label{subsection:3.3}
In the case of a non-positive curvature surface, the Gauss-Bonnet Theorem implies the sum of three angles in a triangle is less than or equal to $\pi$. However, in the case of arbitrary curvature, the sum of three angles in a triangle can be higher than $\pi$, which leads to complicated behavior of the angle $\angle BYC$ as $Y$ moves on $\gamma_{A,X}$. In this subsection, we first investigate the condition from Proposition \ref{prop:condition_increase} on a general 2-D sphere with radius $R$. Then, we will extend that condition for a general surface with curvature bounded above by $1/R^2$, which is the curvature of a sphere with radius $R$. The idea is to use Jacobi fields.

We first come to the proposition of putting a condition for the increase of angles along the geodesic as in Proposition \ref{prop:condition_increase}.

\begin{Proposition} \label{prop:90_increase}
    Let $X$ be a point on $\gamma_{B,C}$. Let $Y_1$ be a point on geodesic $\gamma_{A,X}$ between $A$ and $X$. Let $Y_2$ be a point on $\gamma_{A,X}$ between $Y_1$ and $X$, and let $T$ be a point on geodesic $\gamma_{B,Y_2}$ such that $\gamma_{Y_1,T}$ is orthogonal to $\gamma_{B,Y_1}$. Let $\varepsilon > 0$. Assume that $Y_2$ and $T$ are in a small neighborhood of $Y_1$ such that 
    \begin{align}
        \label{triangle-epsilon}
            \pi - \varepsilon < m(\angle TY_1Y_2) + m(\angle TY_2Y_1) + m(\angle Y_1TY_2) < \pi + \varepsilon.
    \end{align}
    Then, we have a condition:
    $$\text{If $m(\angle Y_1TB) < \pi/2 - \varepsilon$, then $m(\angle BY_2X) > m(\angle BY_1X).$}$$
    
\end{Proposition}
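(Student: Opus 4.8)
The plan is to work entirely inside the small triangle $Y_1 Y_2 T$ and compare the angle $\angle BY_2X$ with $\angle BY_1X$ by decomposing each into pieces that can be controlled via the orthogonality hypothesis and the near-Euclidean angle sum \eqref{triangle-epsilon}. First I would observe that since $\gamma_{Y_1,T}$ is orthogonal to $\gamma_{B,Y_1}$, the angle $\angle BY_1X$ splits as $m(\angle BY_1T) + m(\angle TY_1Y_2)$ or, more usefully, we have $m(\angle BY_1X) = \pi - m(\angle BY_1Y_2)$ only when $X$ is on the far side; the cleaner relation is that $\gamma_{Y_1,T} \perp \gamma_{B,Y_1}$ gives $m(\angle BY_1 X) = \pi/2 + m(\angle TY_1 Y_2)$ (taking $X$, hence $Y_2$, on the appropriate side of the perpendicular, which is the geometrically relevant configuration since $Y_2$ lies between $Y_1$ and $X$). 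So controlling $m(\angle BY_1X)$ reduces to controlling $m(\angle TY_1Y_2)$.

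Next I would express $m(\angle BY_2X)$ similarly. Since $T$ lies on $\gamma_{B,Y_2}$, the ray from $Y_2$ toward $B$ is the same as the ray from $Y_2$ toward $T$, so $m(\angle BY_2 X) = m(\angle BY_2 Y_1) = \pi - m(\angle TY_2 Y_1)$ (using that $Y_1$ and $X$ are on opposite sides along the geodesic $\gamma_{A,X}$ through $Y_2$). Thus the desired inequality $m(\angle BY_2X) > m(\angle BY_1X)$ becomes
\[
\pi - m(\angle TY_2Y_1) > \tfrac{\pi}{2} + m(\angle TY_1Y_2),
\]
i.e. $m(\angle TY_1Y_2) + m(\angle TY_2Y_1) < \pi/2$. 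Now I invoke \eqref{triangle-epsilon}: the three angles of the small triangle $Y_1 Y_2 T$ sum to less than $\pi + \varepsilon$, so $m(\angle TY_1Y_2) + m(\angle TY_2Y_1) < \pi + \varepsilon - m(\angle Y_1TY_2)$. Finally, $m(\angle Y_1TB) < \pi/2 - \varepsilon$ together with the fact that $Y_2$ lies on $\gamma_{B,Y_2}$ on the far side of $T$ from $B$ gives $m(\angle Y_1 T Y_2) = \pi - m(\angle Y_1 T B) > \pi/2 + \varepsilon$, so $m(\angle TY_1Y_2) + m(\angle TY_2Y_1) < \pi + \varepsilon - (\pi/2 + \varepsilon) = \pi/2$, which is exactly what we need.

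The main obstacle I anticipate is bookkeeping the orientations and "sides": each of the identities $m(\angle BY_1X) = \pi/2 + m(\angle TY_1Y_2)$, $m(\angle BY_2X) = \pi - m(\angle TY_2Y_1)$, and $m(\angle Y_1TY_2) = \pi - m(\angle Y_1TB)$ depends on $Y_2$ being genuinely between $Y_1$ and $X$ and on $T$ lying beyond $Y_1$ relative to $B$ along $\gamma_{B,Y_2}$ — one has to check these are the configurations forced by the hypotheses and that "angle" (defined as the smaller of two angles in the Definition) does not silently truncate one of these sums at $\pi$. I would handle this by choosing the neighborhood of $Y_1$ small enough that all the angles in triangle $Y_1Y_2T$ are close to their Euclidean values, so in particular $m(\angle Y_1TY_2)$ and $m(\angle Y_1TB)$ are genuinely supplementary and each individual angle stays in a range where the "smaller angle" convention is not triggered; the quantitative slack is provided by $\varepsilon$, which is why \eqref{triangle-epsilon} is stated as a two-sided estimate. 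Once the configuration is pinned down, the computation above is just linear arithmetic in the angle measures.
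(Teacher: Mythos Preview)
Your computation is essentially the paper's Case 2 argument, and it is correct in that case, but there is a genuine gap: the identity $m(\angle BY_1X) = \pi/2 + m(\angle TY_1Y_2)$ is \emph{not} forced by the hypotheses. It holds only when $m(\angle BY_1X) \ge \pi/2$, i.e.\ when the perpendicular direction $\gamma_{Y_1,T}$ lies between $\gamma_{Y_1,B}$ and $\gamma_{Y_1,Y_2}$ at $Y_1$. Nothing in the statement rules out $m(\angle BY_1X) < \pi/2$; this depends on the fixed position of $Y_1$ on $\gamma_{A,X}$ and does not change no matter how small a neighborhood you take around $Y_1$. So your proposed fix in the last paragraph (``choose the neighborhood small enough'') cannot work.

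When $m(\angle BY_1X) < \pi/2$, all three of your key identities flip: one has $m(\angle BY_1X) = \pi/2 - m(\angle TY_1Y_2)$; the point $T$ lies beyond $Y_2$ (not between $B$ and $Y_2$), so $m(\angle Y_1TY_2) = m(\angle Y_1TB)$ rather than $\pi - m(\angle Y_1TB)$; and $m(\angle BY_2X) = m(\angle TY_2Y_1)$ rather than $\pi - m(\angle TY_2Y_1)$. Plugging these in, the target inequality becomes $m(\angle TY_1Y_2) + m(\angle TY_2Y_1) > \pi/2$, the \emph{opposite} of what you proved, and one must now use the \emph{lower} bound in \eqref{triangle-epsilon} together with $m(\angle Y_1TY_2) < \pi/2 - \varepsilon$ to conclude. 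This is exactly why the paper states \eqref{triangle-epsilon} as a two-sided estimate and splits the proof into three cases according to the sign of $m(\angle BY_1X) - \pi/2$. (Minor aside: in your second paragraph the equality $m(\angle BY_2X) = m(\angle BY_2Y_1)$ is a slip; you mean $\pi - m(\angle BY_2Y_1)$, which is what you then use.)
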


\begin{figure}[h!]
    \centering
    \begin{subfigure}{.33\textwidth}
  \centering
  \includegraphics[width=.9\linewidth]{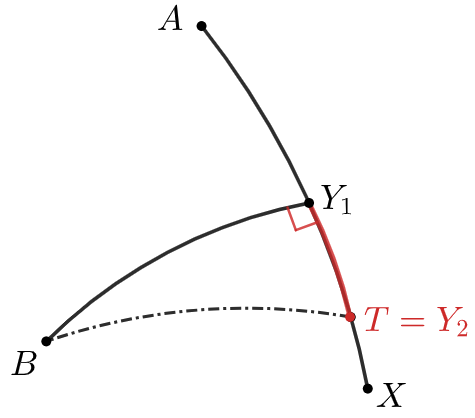}
  \caption{Case 1: $m(\angle BY_1X) = \pi/2$}
  \label{fig:increase_angle_e90}
\end{subfigure}
    \begin{subfigure}{.33\textwidth}
  \centering
  \includegraphics[width=.9\linewidth]{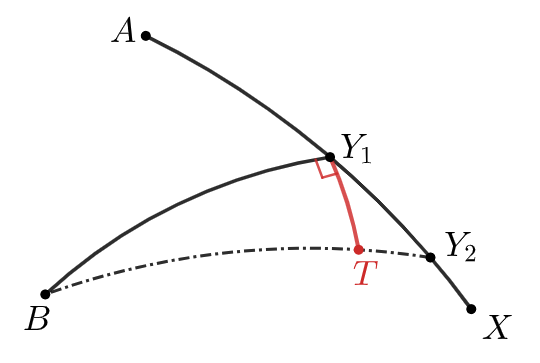}
  \caption{Case 2: $m(\angle BY_1X) > \pi/2$}
  \label{fig:increase_angle_h90}
\end{subfigure}%
\begin{subfigure}{.33\textwidth}
  \centering
  \includegraphics[width=.9\linewidth]{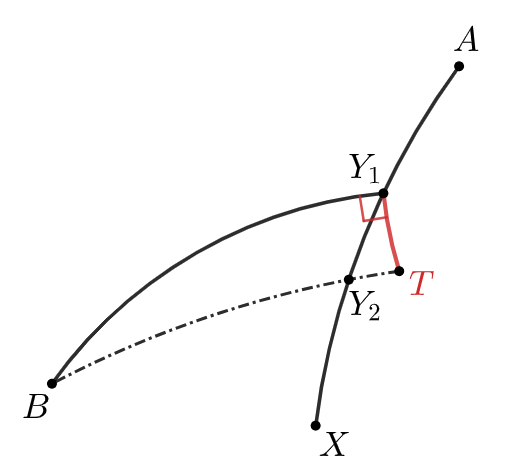}
  \caption{Case 2: $m(\angle BY_1X) < \pi/2$}
  \label{fig:increase_angle_l90}
\end{subfigure} 
    \caption{The increase of angle}
    \label{fig:increase_angle_lemma}
\end{figure}

\begin{proof}
     Assume that the angle $\angle Y_1TB$ measures less than $\pi/2$. There are three main cases:
    \begin{itemize}
        \item \textbf{Case 1:} $m(\angle BY_1X) = \pi/2$. This case is clear because $T \equiv Y_2$ and $$m(\angle BY_2X) = \pi - m(\angle BTY_1) > \pi - (\pi/2 - \varepsilon) = \pi/2 + \varepsilon > \pi/2 = m(\angle BY_1X).$$
        \item \textbf{Case 2:} $m(\angle BY_1X) > \pi/2$. Since $\angle BY_1Y_2$ measures larger than $\angle BY_1T$, then the geodesic $\gamma_{Y_1,T}$ is between geodesics $\gamma_{Y_1,B}$ and $\gamma_{Y_1Y_2}$. 
        Thus,
        \begin{align*}
            m(\angle BY_2X) &= \pi - m(\angle BY_2Y_1) \\
                            &> \pi + m(\angle TY_1Y_2) + m(\angle Y_2TY_1) - \pi - \varepsilon \quad (\text{From (\ref{triangle-epsilon})}) \\
                            &= m(\angle TY_1Y_2) + (\pi - m(\angle Y_1TB)) -\varepsilon\\
                            &> m(\angle TY_1Y_2) + \pi - (\pi/2 - \varepsilon)  - \varepsilon\\
                            &= m(\angle TY_1Y_2) + \pi/2 \\
                            &= m(\angle TY_1Y_2) + m(\angle BY_1T) \\
                            &= m(BY_1X).
        \end{align*}    
        \item \textbf{Case 3:} $m(\angle BY_1X) < \pi/2$. We have the geodesic $\gamma_{Y_1,Y_2}$ is between geodesic $\gamma_{Y_1,B}$ and geodesic $\gamma_{Y_1,T}$.
        Thus,
        \begin{align*}
            m(\angle BY_2X) &= m(\angle Y_1Y_2T) \\
                            &> \pi - \varepsilon - m(\angle TY_1Y_2) - m(\angle Y_1TY_2) \quad \text{(From (\ref{triangle-epsilon}))} \\
                            &> \pi - \varepsilon - m(\angle TY_1Y_2) - (\pi/2 - \varepsilon)\\
                            &= \pi/2 - m(\angle TY_1Y_2)\\
                            &= m(\angle BY_1T) - m(\angle TY_1Y_2) \\
                            &= m(\angle BY_1X).
        \end{align*}
    \end{itemize}
    Therefore, in all three cases, we have $m(\angle BY_2X) > m(\angle BY_1X)$.
\end{proof}

\begin{Remark}
    Since the proposition is true for all $\varepsilon>0$, we can obtain the condition:
    $$\text{If $m(\angle Y_1TB) < \pi/2$ and $T$ is sufficiently close to $Y_1$, then $m(\angle BY_2X) > m(\angle BY_1X).$}$$
\end{Remark}
This proposition allows us to work with right angles instead of general angles in terms of proving the increase of the measures of $\angle BYX$ and $\angle CYX$ at $Y_X$ in Proposition \ref{prop:condition_increase}. From that, we can take advantage of the Jacobi Fields along geodesic $\gamma_{B,Y}$ that are perpendicular to that geodesic. Next, we discuss how the change of the magnitude of Jacobi fields at the point $Y_1$ relates to the increase in the measure of $\angle BY_1X$.
\begin{Proposition} \label{prop:derivative_jacobi_positive}
    Let $\gamma(\theta,t)$ be a variation of geodesics, based on the Gauss lemma (Lemma 3.5 \cite{do1992riemannian}), for some $\epsilon>0$, given by
    $$\gamma(\theta,t) = \exp_{B}tv(\theta), \quad 0 \leq t \leq 1,\quad -\epsilon \leq \theta \leq \epsilon,$$
    such that  $v(\theta)$ is a curve in $T_B M $, $\gamma(0,t) = \gamma_{B,Y_1}$, $\gamma(0,0)=B$, and $\gamma(0,t_0) = Y_1$, for $t_0 \leq 1$. Assume that $\gamma(\theta,0) = B$, which means the geodesic is fixed at a boundary point $B$ when $\theta$ varies. Let $J(t)$ be a Jacobi field along $\gamma(0,t)$ such that $J(t) = \dfrac{d\gamma}{d\theta}(0,t)$, $J(0)=0$, $\lVert J'(0) \rVert > 0$, and $\langle J(t),\gamma_t(0,t) \rangle = 0$, for $t \in [0,t_0]$. Assume that $T = \gamma(\theta_0,t_0)$, for some $\theta_0 \in (0, \varepsilon]$, and $v(\theta)$ varies such that $\gamma(\theta,t_0)$ is a constant-speed geodesic with parameter $\theta$. Let $f: [0,\theta_0] \to M$ such that $f(\theta)=\gamma(\theta,t_0)$ for $\theta \in [0,\theta_0]$. We have a new condition:
    $$\text{If $(\lVert J\rVert^2)'(t_0) > 0$, then $m(\angle Y_1TB) < \pi/2$.}$$
\end{Proposition}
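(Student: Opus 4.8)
\section*{Proof proposal}

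The plan is to convert the statement about the angle $\angle Y_1TB$ into a statement about the sign of a single scalar function of the variation parameter, and then to identify the relevant derivative of that function with $\tfrac12(\lVert J\rVert^2)'(t_0)$. Set $f(\theta):=\gamma(\theta,t_0)$; by hypothesis this is a constant-speed geodesic with $f(0)=Y_1$ and $f(\theta_0)=T$. Since all the points involved lie in $\mathcal U$, where the geodesic joining two points is unique, the reversal of $f$ is exactly the geodesic $\gamma_{T,Y_1}$, so the tangent ray of $\gamma_{T,Y_1}$ at $T$ points in the direction $-f'(\theta_0)=-\gamma_\theta(\theta_0,t_0)$. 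Likewise $t\mapsto\gamma(\theta_0,t_0-t)$ is the geodesic from $T$ to $B$, so the tangent ray of $\gamma_{T,B}$ at $T$ points in the direction $-\gamma_t(\theta_0,t_0)$. Hence, by the definition of the angle as the (smaller) angle between these two tangent rays,
\[
\cos\bigl(m(\angle Y_1TB)\bigr)=\frac{\langle \gamma_\theta(\theta_0,t_0),\gamma_t(\theta_0,t_0)\rangle}{\lVert\gamma_\theta(\theta_0,t_0)\rVert\,\lVert\gamma_t(\theta_0,t_0)\rVert},
\]
so it suffices to prove that $h(\theta_0)>0$, where $h(\theta):=\langle \gamma_\theta(\theta,t_0),\gamma_t(\theta,t_0)\rangle$.

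Next I would evaluate $h$ and its derivative at $\theta=0$. Since $\gamma_\theta(0,t)=J(t)$ and $\langle J(t),\gamma_t(0,t)\rangle=0$ by assumption, we get $h(0)=0$. To differentiate $h$, view $\gamma_\theta(\cdot,t_0)$ and $\gamma_t(\cdot,t_0)$ as vector fields along the curve $\theta\mapsto f(\theta)$ and use metric compatibility:
\[
h'(\theta)=\Bigl\langle \tfrac{D}{\partial\theta}\gamma_\theta(\theta,t_0),\gamma_t(\theta,t_0)\Bigr\rangle+\Bigl\langle \gamma_\theta(\theta,t_0),\tfrac{D}{\partial\theta}\gamma_t(\theta,t_0)\Bigr\rangle.
\]
The first term vanishes because $\tfrac{D}{\partial\theta}\gamma_\theta(\theta,t_0)$ is the acceleration of the geodesic $f$; for the second term, the symmetry lemma for the Levi--Civita connection gives $\tfrac{D}{\partial\theta}\gamma_t=\tfrac{D}{\partial t}\gamma_\theta$, which at $\theta=0$ equals $J'(t_0)$. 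Therefore $h'(0)=\langle J(t_0),J'(t_0)\rangle=\tfrac12(\lVert J\rVert^2)'(t_0)$, which is positive by hypothesis.

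Finally, since $h(0)=0$ and $h'(0)>0$, there is $\delta>0$ with $h(\theta)>0$ for all $\theta\in(0,\delta)$; taking $T=\gamma(\theta_0,t_0)$ with $\theta_0\in(0,\delta)$ — which is precisely the regime in which the proposition is applied, namely $T$ in a small neighborhood of $Y_1$ as in Proposition \ref{prop:90_increase} — we obtain $h(\theta_0)>0$ and hence $m(\angle Y_1TB)<\pi/2$, as desired. The step I expect to require the most care is the identification of $\gamma_{T,Y_1}$ with the chosen geodesic variation $f$: this rests on the hypothesis that the variation stays inside $\mathcal U$ together with the uniqueness of geodesics there, and it also makes transparent that the conclusion is genuinely of ``for $T$ sufficiently close to $Y_1$'' type, consistent with the remark following Proposition \ref{prop:90_increase}. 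A minor additional point is to note that $\gamma_\theta(\theta_0,t_0)$ and $\gamma_t(\theta_0,t_0)$ are nonzero (the latter because the geodesics are nondegenerate, the former because $h\not\equiv0$ near $0$ forces $f$ to be a nonconstant constant-speed geodesic), so that the cosine formula above is meaningful.
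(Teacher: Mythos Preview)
Your argument is correct and is essentially the same as the paper's: both define the scalar function $h(\theta)=\langle\gamma_\theta(\theta,t_0),\gamma_t(\theta,t_0)\rangle$ (called $g$ in the paper), observe $h(0)=0$, and compute $h'(0)$ by killing one summand via the geodesic equation for $f$ and rewriting the other via the symmetry lemma $\tfrac{D}{\partial\theta}\gamma_t=\tfrac{D}{\partial t}\gamma_\theta$ to obtain $\langle J(t_0),J'(t_0)\rangle$. Your write-up is in fact a bit cleaner, since you use $J(t_0)=\gamma_\theta(0,t_0)$ directly rather than the paper's detour through an auxiliary constant $c$, and you make explicit the ``$\theta_0$ small'' caveat that the paper leaves implicit.
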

\begin{figure}[h!]
    \centering
    \includegraphics[width=0.8\linewidth]{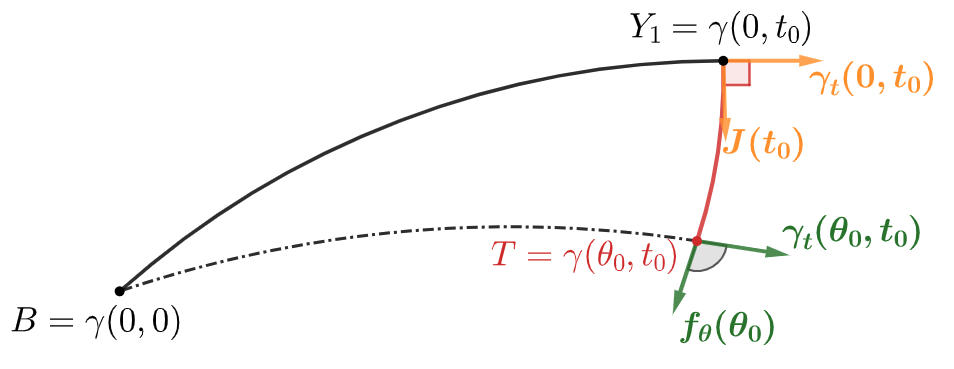}
    \caption{Jacobi Field and angles}
    \label{fig:jacobi_lemma}
\end{figure}

\begin{proof}
Assume that $(\lVert J \rVert^2)'(t_0) > 0$. Since $\gamma_{Y_1,T}$ is perpendicular to $\gamma_{B,Y_1}$, then $f_\theta(0) \perp \gamma_t(0,t_0)$. Since $J(t_0) \perp \gamma_t(0,t_0)$ and $J(t_0) \in T_{Y_1}M$, then $f_\theta(0) = c J(t_0)$, for a non-zero scalar $c$. By construction, $c>0$.

Next, we have the measure of $\angle Y_1TB$ is equal to the measure of the angle between $\gamma_t(\theta_0,t_0)$ and $f_{\theta}(\theta_0)$ (two tangent vectors at $T$). Then, we have
\begin{align*}
    m(\angle Y_1TB) < \pi/2 &\Leftrightarrow \text{The angle between $\gamma_t(\theta_0,t_0)$ and $f_{\theta}(\theta_0)$ measures less than $\pi/2$} \\
    &\Leftrightarrow \langle \gamma_t(\theta_0,t_0),f_{\theta}(\theta_0) \rangle > 0.
\end{align*}
Now, consider the function $g(\theta) = \langle \gamma_t(\theta,t_0),f_{\theta}(\theta) \rangle$. We have $g(0) = \langle \gamma_t(0,t_0),f_{\theta}(0) \rangle = 0$. To prove that $g(\theta_0) = \langle \gamma_t(\theta_0,t_0),f_{\theta}(\theta_0) \rangle > 0$, we will show that $\left(\dfrac{dg}{d\theta} \right)_{\theta=0}> 0$.

Indeed, we have
\begin{align*}
  \left(\dfrac{dg}{d\theta} \right)_{\theta=0} &= \left(\dfrac{d}{d\theta} \langle \gamma_t(\theta,t_0),f_{\theta}(\theta) \rangle \right)_{\theta=0}   \\
  &=\left \langle \left( \frac{D}{\partial \theta} \gamma_t \right)_{(\theta,t)=(0,t_0)} , f_{\theta}(0) \right \rangle + \left \langle \gamma_t(0,t_0),\left( \frac{D}{d\theta} f_\theta(\theta) \right)_{\theta=0}  \right \rangle
\end{align*}
We have $f(\theta)$ parametrizes a geodesic, then $\frac{D}{d\theta} f_\theta(\theta) = 0$, for $\theta \in [0,\theta_0]$. Now we want to show that
$$\frac{D}{\partial \theta} \gamma_t = \frac{D}{\partial t} \gamma_\theta$$
Indeed, let $\partial_{\theta}$ and $\partial_t$ be two coordinate vector fields of the surface near $\gamma(0,t_0)$. Let $\nabla$ be the Levi-Civita connection on $M$. Then, 
$$\frac{D}{\partial \theta} \partial_t = \nabla_{\partial_\theta} \partial_t \quad \text{and} \quad \frac{D}{\partial t} \partial_\theta = \nabla_{\partial_t}\partial_\theta,$$
with $\partial_t = \gamma_t(\theta,t)$ and $ \partial_\theta = \gamma_\theta(\theta, t)$, which is the same as $f_\theta(\theta)$ when $t=t_0$. Since $\gamma$ is smooth, then
$$[\partial_\theta,\partial_t] = \partial_\theta \partial_t - \partial_t \partial_\theta = 0.$$
Thus, we have
$$\frac{D}{\partial \theta} \partial_t - \frac{D}{\partial t} \partial_\theta = \nabla_{\partial_\theta} \partial_t - \nabla_{\partial_t}\partial_\theta 
    = [\partial_\theta,\partial_t] 
    =0.$$
Therefore, 
$$\frac{D}{\partial \theta} \gamma_t = \frac{D}{\partial t} \gamma_\theta.$$
Next, we have
\begin{align*}
    \left \langle \left( \frac{D}{\partial \theta} \gamma_t \right)_{(\theta,t)=(0,t_0)} , f_{\theta}(0) \right \rangle &= \left \langle \left( \frac{D}{\partial t} \gamma_\theta \right)_{(\theta,t)=(0,t_0)} , f_{\theta}(0) \right \rangle \\
    &= \left \langle \left( \frac{D}{dt} \gamma_\theta(0,t)\right)_{t=t_0} , f_{\theta}(0) \right \rangle \\
    &= \left \langle \left( \frac{D}{dt} J(t)\right)_{t=t_0} , cJ(t_0) \right \rangle \\
    &= c\langle J',J \rangle(t_0) \\
    &= \frac{c}{2} (\langle J,J\rangle)'(t_0) \\
    &= \frac{c}{2} (\lVert J \rVert^2 )'(t_0) \\
    &>0.
\end{align*}
Therefore, 
$$ \left(\dfrac{dg}{d\theta} \right)_{\theta=0} = \underbrace{\left \langle \left( \frac{D}{\partial \theta} \gamma_t \right)_{(\theta,t)=(0,t_0)} , f_{\theta}(0) \right \rangle}_{>0} + \left \langle \gamma_t(0,t_0),\underbrace{\left( \frac{D}{d\theta} f_\theta(\theta) \right)_{\theta=0}}_{=0} \right \rangle > 0.$$
\end{proof}

By combining this proposition with Proposition \ref{prop:90_increase}, we just need to investigate when $(\lVert J \rVert^2)'(t_0) > 0$ to achieve the increasing-angle condition in Proposition \ref{prop:condition_increase}. This is also the main condition we will use for the next results in the case of surfaces with curvatures bounded from above. 
\\

Now, we start with the constant positive curvature case by investigating $(\lVert J(t) \rVert^2)'$ on a round sphere with radius $R$. The below proposition is shown in \textbf{Example 2.3, Chapter 5} of \cite{do1992riemannian}.

\begin{Proposition} \label{prop:sphere_jacobi_field}
    Let $\gamma$ be a normalized geodesic on a round sphere with radius $R$. Let $J$ be a Jacobi field along $\gamma$ that is orthogonal to $\gamma'$. Let $w(t)$ be a parallel field along $\gamma$ with $\langle \gamma'(t), w(t)\rangle = 0$ and $\lVert w(t) \rVert = 1$. Then,
    $$
    J(t) = R\sin \left( \frac{t}{R} \right) w(t).
    $$
\end{Proposition}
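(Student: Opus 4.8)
The plan is to reduce the Jacobi equation on the sphere to a scalar second-order linear ODE with constant coefficients and then either solve it directly or exhibit the claimed field and appeal to uniqueness. First I would record that a round sphere of radius $R$ has constant Gaussian curvature $K \equiv 1/R^2$, so by Equation \eqref{eqn:jacobi} the field $J$ satisfies $\frac{D^2 J}{dt^2} + \frac{1}{R^2} J = 0$ along $\gamma$. Since $\gamma$ is normalized and $J$ is orthogonal to $\gamma'$ (in particular $J(0) \perp \gamma'(0)$), the Corollary following Lemma \ref{prop: jacobi_property} — or simply the fact that on a surface $\operatorname{span}\{\gamma'(t)\}^{\perp} \subset T_{\gamma(t)}M$ is one-dimensional — lets me write $J(t) = \phi(t)\, w(t)$ for the smooth scalar function $\phi(t) = \langle J(t), w(t)\rangle$, where $w$ is the prescribed unit parallel field along $\gamma$ with $w \perp \gamma'$.

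Next, because $w$ is parallel along $\gamma$, covariant differentiation along $\gamma$ acts on $J = \phi w$ just by differentiating the scalar factor: $J'(t) = \phi'(t) w(t)$ and $\frac{D^2 J}{dt^2} = \phi''(t) w(t)$. Substituting into the Jacobi equation and using $\lVert w \rVert = 1$ yields the scalar ODE $\phi'' + \frac{1}{R^2}\phi = 0$, whose general solution is $\phi(t) = a\cos(t/R) + b\sin(t/R)$. The normalization implicit in the statement is $J(0) = 0$ and $\lVert J'(0)\rVert = 1$, i.e. $\phi(0) = 0$ and $\phi'(0) = 1$ (after possibly replacing $w$ by $-w$), which forces $a = 0$ and $b = R$, so $\phi(t) = R\sin(t/R)$ and $J(t) = R\sin(t/R)\, w(t)$. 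Alternatively — and perhaps more cleanly — one can avoid solving the ODE: check in one line that $R\sin(t/R)\, w(t)$ satisfies $\frac{D^2 J}{dt^2} + \frac{1}{R^2}J = 0$ with the stated initial data, and then invoke the uniqueness of a Jacobi field determined by $J(0)$ and $J'(0)$.

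I do not expect a genuine obstacle here; this is the standard constant-curvature Jacobi field computation. The only points needing a little care are: (i) justifying the decomposition $J = \phi w$, which uses both the orthogonality $J \perp \gamma'$ and the rank-one normal bundle of a geodesic on a surface, together with smoothness of $\phi$; and (ii) pinning down the constant $b$, which depends on the (unstated but evidently intended) normalization $\lVert J'(0)\rVert = 1$ — for a general $J'(0) = \lambda\, w(0)$ one obtains $J(t) = \lambda R\sin(t/R)\, w(t)$, and the displayed formula is the case $\lambda = 1$. I would make this normalization explicit in the statement or proof.
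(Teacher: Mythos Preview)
Your argument is correct and is precisely the standard computation the paper defers to by citing Example 2.3, Chapter 5 of do Carmo rather than giving its own proof: reduce the Jacobi equation on the constant-curvature sphere to the scalar ODE $\phi'' + R^{-2}\phi = 0$ via the parallel unit normal frame and read off the solution. Your observation about the missing normalization is also on point --- the statement as written does not pin down the amplitude, and the intended initial data (implicit from the surrounding context, e.g.\ Proposition~\ref{prop:derivative_jacobi_positive}) are $J(0)=0$ with $J'(0)=w(0)$; making this explicit, as you suggest, would improve the exposition.
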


\begin{Corollary} \label{col: sphere_jacobi}
    If $\gamma$ is a normalized geodesic, i.e. $\lVert \gamma' \rVert = 1$, then $(\lVert J(t) \rVert^2)' >0
    $ for all $t \in (0,R\pi/2)$. Therefore, if the maximum geodesic distance of two points in the domain of the triangle $ABC$ on a sphere with radius $R$ is less than $R\pi/2$, it will satisfy the condition in Proposition \ref{prop:condition_increase}.
\end{Corollary}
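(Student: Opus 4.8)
The plan is to read off everything from the explicit formula for the Jacobi field on the round sphere provided by Proposition \ref{prop:sphere_jacobi_field}. Since $\gamma$ is normalized and $J(t) = R\sin(t/R)\,w(t)$ with $w$ a unit parallel field, we have $\lVert J(t)\rVert^2 = R^2\sin^2(t/R)$, so
\begin{align*}
    (\lVert J(t)\rVert^2)' = 2R^2 \sin\!\left(\tfrac{t}{R}\right)\cos\!\left(\tfrac{t}{R}\right)\cdot \tfrac{1}{R} = R\sin\!\left(\tfrac{2t}{R}\right).
\end{align*}
For $t \in (0, R\pi/2)$ the argument $2t/R$ lies in $(0,\pi)$, where $\sin$ is strictly positive; hence $(\lVert J(t)\rVert^2)'(t) > 0$ throughout that interval. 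This is the first assertion.

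For the second assertion, I would combine this with the chain of implications already assembled in the preceding propositions. Let $ABC$ be a triangle on the sphere of radius $R$ whose domain has diameter (maximum geodesic distance between two of its points) less than $R\pi/2$. Fix $X \in \gamma_{B,C}$ and a point $Y_1$ on $\gamma_{A,X}$ near $Y_X$; then $t_0 = d(B,Y_1) < R\pi/2$ since $B$ and $Y_1$ both lie in the domain of the triangle, so $(\lVert J\rVert^2)'(t_0) > 0$ for the Jacobi field along $\gamma_{B,Y_1}$ arising from the geodesic variation fixing $B$, as set up in Proposition \ref{prop:derivative_jacobi_positive}. That proposition then yields $m(\angle Y_1 T B) < \pi/2$, where $T$ is the foot on $\gamma_{B,Y_2}$ making $\gamma_{Y_1,T} \perp \gamma_{B,Y_1}$; and by the Remark following Proposition \ref{prop:90_increase}, taking $Y_2$ (hence $T$) sufficiently close to $Y_1$, this gives $m(\angle BY_2X) > m(\angle BY_1X)$. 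The symmetric argument with $C$ in place of $B$ (the relevant distance $d(C,Y_1)$ is likewise less than $R\pi/2$) gives $m(\angle CY_2X) > m(\angle CY_1X)$, and adding the two, $m(\angle BY_2C) > m(\angle BY_1C)$. Thus the angle function $Y \mapsto m(\angle BYC)$ is strictly increasing in a neighborhood of $Y_X$ along $\gamma_{A,X}$ as a function of $d(A,Y)$, for every $X$, which is exactly the hypothesis of Proposition \ref{prop:condition_increase}.

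The main thing to be careful about is bookkeeping on which distance must be bounded by $R\pi/2$: the Jacobi field argument is applied along $\gamma_{B,Y_1}$ and along $\gamma_{C,Y_1}$, so one needs $d(B,Y_1) < R\pi/2$ and $d(C,Y_1) < R\pi/2$, and since $Y_1$ lies inside $\triangle ABC$ this is guaranteed precisely by the hypothesis that the domain of the triangle has diameter less than $R\pi/2$ (this bound also keeps $\gamma_{B,Y_1}$ free of conjugate points, which is needed for the variational setup). A secondary point is that the angle decomposition $m(\angle BYC) = m(\angle BYX) + m(\angle XYC)$ used implicitly requires $Y$ to lie on $\gamma_{A,X}$ on the correct side, which holds since $Y$ ranges between $A$ and $X$; I would note this in passing but it requires no real work. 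Everything else is a direct citation of the earlier propositions, so no genuine obstacle remains once the sign computation above is in hand.
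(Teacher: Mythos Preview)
Your proposal is correct and matches the paper's approach: the paper computes $(\lVert J(t)\rVert^2)' = 2R\sin(t/R)\cos(t/R)$ from the same explicit formula and observes positivity on $(0,R\pi/2)$, which is identical to your $R\sin(2t/R)$ via the double-angle identity. The paper's own proof of the corollary stops after this computation and defers the explicit chain through Propositions~\ref{prop:derivative_jacobi_positive} and~\ref{prop:90_increase} to the proof of Theorem~\ref{thm:sphere}, so your version is slightly more self-contained but follows exactly the same logic.
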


\begin{proof}
    From Proposition \ref{prop:sphere_jacobi_field}, we have
    $$\lVert J(t) \rVert^2 = \langle J(t), J(t) \rangle = R^2 \sin^2(t/R).$$
    Then,
    $$(\lVert J(t) \rVert^2)' = (R^2 \sin^2(t/R))' = 2R \sin (t/R)\cos(t/R).$$
    Thus, for all $t \in (0,R\pi/2)$, we have
    $$(\lVert J(t) \rVert^2)' = 2R \sin (t/R)\cos(t/R) > 0.$$
\end{proof}

From this, we can describe the condition for the angle increasing through the Jacobi Field on 2-D spheres. Next, we combine Corollary \ref{col: sphere_jacobi} with Proposition \ref{prop:condition_increase} to prove the existence of a balanced point on a round sphere.

\begin{Proposition}
    \label{thm:sphere}
    Given triangle $ABC$ on a round sphere $M$ with Gaussian curvature $1/R^2$ such that its three angles measure less than $2\pi/3$. If the maximum geodesic distance of two points in the domain of the triangle $ABC$ is less than $R\pi/2$, then there exists a balanced point.
\end{Proposition}

\begin{proof}
Let $X$ be a point on $\gamma_{B,C}$ and $Y$ be a point on $\gamma_{A,X}$. We have the maximum lengths of $\gamma_{B,Y}$ and $\gamma_{C,Y}$ are both less than $R\pi/2$. By Corollary \ref{col: sphere_jacobi}, we have $m(\angle BYX)$ and $m(\angle CYX)$ are increasing as $Y$ moves from $A$ to $X$ (Figure \ref{fig:theorem_sphere}). From that, by Proposition \ref{prop:condition_increase}, there exists a balanced point in the triangle $ABC$.  
\end{proof}

\begin{figure}[h!]
    \centering
    \includegraphics[width=0.35\linewidth]{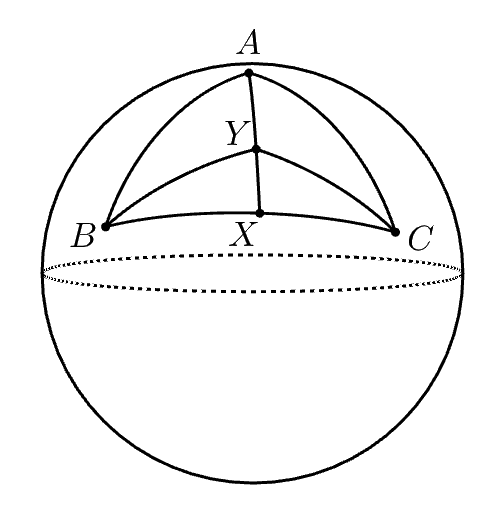}
    \caption{Triangle $ABC$ on a sphere}
    \label{fig:theorem_sphere}
\end{figure}

Now, we generalize the condition to general surfaces with Gaussian curvature $K$ bounded above by the curvature of a sphere as Theorem \ref{thm:positive}

\begin{proof}[\textbf{Proof of Theorem \ref{thm:positive}}]
Let $X$ be a point on $\gamma_{B,C}$ and $Y$ be a point on $\gamma_{A,X}$. Let $\gamma(t):[0,a] \to M$ parameterize $\gamma_{B,X}$ such that $\gamma(0) = B, \gamma(a) = X$, and $\lVert \gamma'(t) \rVert = 1$, for $t \in [0,a]$. Let $J(t)$ be a Jacobi field along $\gamma(t)$ such that $J(0)=0$, $\lVert J'(0) \rVert > 0$, and $\langle J(t), \gamma'(t) \rangle = 0$, for all $t \in [0,a]$. We need to prove $(\lVert J(t) \rVert^2)'>0$ so that we can use Proposition \ref{prop:derivative_jacobi_positive} and Proposition \ref{prop:90_increase} to achieve the increase-angle condition in Proposition \ref{prop:condition_increase}.

Let $\widetilde{M}$ be a sphere with radius $R$. Then, $M$ has the Gaussian curvature $\widetilde{K} = \frac{1}{R^2}$. Let $\widetilde{\gamma}:[0,a] \to \widetilde{M}$ be a geodesic with unit velocity (i.e. $\lVert \widetilde{\gamma}'(t) \rVert = 1 = \lVert \gamma'(t) \rVert$). Then, $\widetilde{\gamma}(t)$ has the same length as $\gamma(t)$ as $t \in [0,a]$ that is less than $R\pi/2$. Let $\Tilde{J}(t)$ be a Jacobi field along $\widetilde{\gamma}$ satisfying these conditions:
\begin{itemize}
    \item $\Tilde{J}(0) = J(0) = 0$
    \item $\langle \Tilde{J}(t),\widetilde{\gamma}'(t) \rangle =  \langle J(t), \gamma'(t) \rangle = 0$, for all $t \in [0,a]$
    \item $\lVert \Tilde{J}'(0) \rVert = \lVert J'(0) \rVert > 0$.
\end{itemize}
Since the length of $\widetilde{\gamma}$ is less than $R\pi/2$, then by Corollary \ref{col: sphere_jacobi}, we have $(\lVert \Tilde{J}(t) \rVert^2)' > 0$, for all $t \in [0,a]$. Our next goal is proving that  $(\lVert J(t) \rVert^2)' \geq (\lVert \Tilde{J}(t) \rVert^2)'$, for all $t \in [0,a]$.

\begin{figure}[h!]
    \centering
    \includegraphics[width=0.8\linewidth]{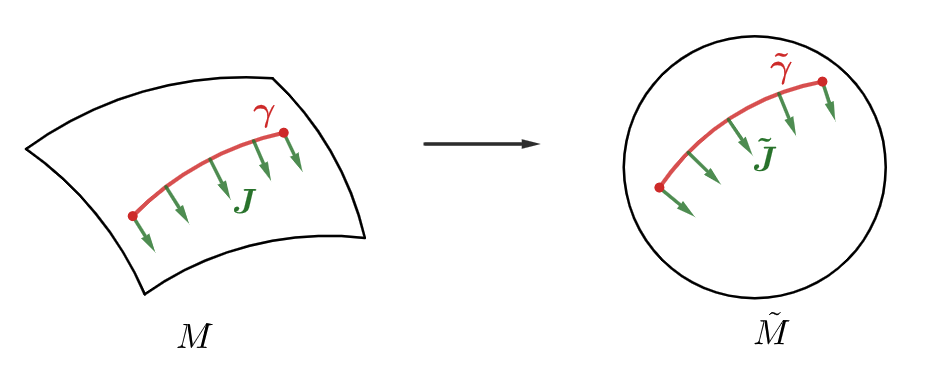}
    \caption{Relating Jacobi fields in $M$ and $\widetilde{M}$}
    \label{fig:theorem_positive}
\end{figure}

Indeed, let $v(t) := \lVert J(t) \rVert^2$ and $\Tilde{v}(t) := \lVert \Tilde{J}(t) \rVert^2$. We have the length of $\widetilde{\gamma}(t)$ for $t \in (0,a]$ is less than $R\pi/2$. Then $\widetilde{\gamma}(t)$ has no conjugate point when $t \in (0,a]$. Thus, $\Tilde{v}(t) = \lVert \Tilde{J}'(t) \rVert^2 > 0$, for all $t \in [0,a]$. From the L'Hospital's rule,
\begin{align*}
    \lim_{t \to 0^+} \frac{v(t)}{\Tilde{v}(t)} &= \lim_{t \to 0^+} \frac{v'(t)}{\Tilde{v}'(t)} = \lim_{t \to 0^+} \frac{\langle J(t),J(t) \rangle'}{ \langle \Tilde{J}(t), \Tilde{J}(t) \rangle '} = \lim_{t \to 0^+} \frac{2\langle J'(t),J(t) \rangle}{ 2\langle \Tilde{J}'(t), \Tilde{J}(t) \rangle} \\
    &= \lim_{t \to 0^+} \frac{\langle J''(t), J(t) \rangle + \langle J'(t),J'(t) \rangle}{\langle \Tilde{J}''(t), \Tilde{J} \rangle + \langle \Tilde{J}'(t), \Tilde{J}'(t) \rangle} \\
    &= \frac{\langle J''(0), J(0) \rangle + \langle J'(0),J'(0) \rangle}{\langle \Tilde{J}''(0), \Tilde{J}(0) \rangle + \langle \Tilde{J}'(0), \Tilde{J}'(0) \rangle}\\
    &= \frac{\lVert J'(0) \rVert^2}{\lVert \Tilde{J}'(0) \rVert^2} \quad \quad (\text{Since } J(0) = \Tilde{J}(0) = 0)\\
    &= 1.
\end{align*}
From that,
$$\lim_{t \to 0^+} \frac{v'(t)}{\Tilde{v}'(t)} = \lim_{t \to 0^+} \frac{(\lVert J(t) \rVert^2)'}{(\lVert \Tilde{J}(t) \rVert^2)'} = 1.$$
Therefore, to prove that $v'(t) \geq \Tilde{v}'(t)$, we need to show that $\dfrac{d}{dt}\left( \dfrac{v'(t)}{\Tilde{v}'(t)}\right) \geq 0$, or equivalently,
\begin{align}\label{v_condition}
    v'(t)\Tilde{v}(t) \geq v(t)\Tilde{v}'(t), \quad \text{for all }t \in [0,a]. 
\end{align}

Now, fix $t_0 \in (0,a]$. If $v(t_0) = 0$, then $J(t_0)=0$ and
$$v'(t_0) = 2 \langle J'(t_0), J(t_0) \rangle = 0.$$
Then, both sides of (\ref{v_condition}) are 0.

Suppose that $v(t_0) \neq 0$. Then, define
$$U(t) = \frac{1}{\sqrt{v(t_0)}}J(t), \quad \Tilde{U}(t) = \frac{1}{\sqrt{\Tilde{v}(t_0)}}\Tilde{J}(t).$$
Then,
\begin{align*}
    \dfrac{v'(t_0)}{v(t_0)} &= \dfrac{2\langle J'(t_0),J(t_0) \rangle}{\langle J(t_0),J(t_0) \rangle} = 2\langle U'(t_0),U(t_0) \rangle = 2\langle U',U \rangle(t_0) \\
    &= 2\int_0^{t_0} \langle U',U \rangle' dt = 2\int_0^{t_0} (\langle U',U' \rangle + \langle U'',U \rangle) dt \\
    &= 2\int_0^{t_0} (\langle U',U' \rangle - \langle KU,U \rangle) dt \\
    &= 2\int_0^{t_0} (\langle U',U' \rangle - K\langle U,U \rangle) dt \\
    &= 2I_{t_0}(U,U).
\end{align*}
Similarly, we have
$$\dfrac{\Tilde{v}'(t_0)}{\Tilde{v}(t_0)} = 2I_{t_0}(\Tilde{U},\Tilde{U}).$$
Therefore, to show $\dfrac{v'(t_0)}{v(t_0)} \geq \dfrac{\Tilde{v}'(t_0)}{\Tilde{v}(t_0)}$, we need to prove that 
$$I_{t_0}(U,U) \geq I_{t_0}(\Tilde{U},\Tilde{U}).$$

Indeed, let $\lbrace e_1(t),e_2(t) \rbrace$ and $\lbrace \Tilde{e}_1(t), \Tilde{e}_2(t) \rbrace$ be parallel orthonormal bases along $\gamma(t)$ and $\widetilde{\gamma}(t)$, respectively, such that
$$e_1(t) = \gamma'(t)/\lVert \gamma'(t) \rVert, \quad e_2(t) = U(t_0),$$
$$\Tilde{e}_1(t) = \widetilde{\gamma}'(t)/\lVert \widetilde{\gamma}'(t) \rVert, \quad \Tilde{e}_2(t) = \Tilde{U}(t_0).$$

Then, write $U(t) = g_1(t)e_1(t) + g_2(t)e_2(t)$ along $\gamma$. Also, define a map $$\phi: T_{\gamma(t)}M \to T_{\widetilde{\gamma}(t)}M$$ 
such that
$$(\phi U)(t) = g_1(t)\Tilde{e}_1(t)+g_2(t)\Tilde{e}_2(t).$$
Then, we have these properties:
    \item \begin{align*}
        \langle \phi U , \phi U \rangle &= \langle g_1(t)\Tilde{e}_1(t)+g_2(t)\Tilde{e}_2(t), g_1(t)\Tilde{e}_1(t)+g_2(t)\Tilde{e}_2(t) \rangle \\
        &= g_1^2(t) + g_2^2(t)\\
        &= \langle g_1(t)e_1(t) + g_2(t)e_2(t), g_1(t)e_1(t) + g_2(t)e_2(t) \rangle \\
        &= \langle U,U\rangle
    \end{align*}
and $$(\phi U)' = \left( g_1(t)\Tilde{e}_1(t)+g_2(t)\Tilde{e}_2(t) \right)' =  g_1'(t)\Tilde{e}_1(t)+g_2'(t)\Tilde{e}_2(t) = \phi(U').$$
From that, we also have
$$ \langle (\phi U)', (\phi U)' \rangle = \langle \phi(U'),\phi(U') \rangle = \langle U',U' \rangle.$$
Since $K \leq \widetilde{K}$, we have
\begin{align*}
    I_{t_0}(U,U) &= \int_0^{t_0} (\langle U',U' \rangle - K\langle U,U \rangle) dt\\
                 &\geq \int_0^{t_0} (\langle U',U' \rangle - \widetilde{K}\langle U,U \rangle) dt  \\
                 &= \int_0^{t_0} (\langle (\phi U)',(\phi U)' \rangle - \widetilde{K} \langle \phi U, \phi U \rangle) dt \\
                 &= I_{t_0}(\phi U, \phi U).
\end{align*}
Thus, 
$$I_{t_0}(U,U) \geq I_{t_0}(\phi U, \phi U).$$

On the other hand, we have $\Tilde{U}$ and $\phi U$ are two vector fields along $\widetilde{\gamma}$ that satisfy all hypothesis conditions in Lemma \ref{lemma:index}. Also, $\Tilde{U}$ is a Jacobi field along $\widetilde{\gamma}$. By Lemma \ref{lemma:index}, we have
$$I_{t_0}(\phi U, \phi U) \geq I_{t_0}( \Tilde{U}, \Tilde{U}).$$
Therefore,
$$I_{t_0}(U,U) \geq I_{t_0}(\phi U, \phi U) \geq I_{t_0}( \Tilde{U}, \Tilde{U}).$$
Hence, we have $(\lVert J(t) \rVert^2)' \geq (\lVert \Tilde{J}(t) \rVert^2)' > 0$, for all $t \in [0,a]$. Thus, the increase-angle condition in Proposition \ref{prop:condition_increase} is satisfied. Therefore, there exists a balanced point inside triangle $ABC$.
\end{proof}

\begin{Remark}
    This proof is mostly covered in the proof of The Rauch Comparion Theorem \cite{do1992riemannian}. In this problem, we just consider the case of a 2-D surface instead of an $n$-dimensional manifold.
\end{Remark}

From Proposition \ref{thm:sphere} and Theorem \ref{thm:positive}, the bounded maximum geodesic distance in the region of triangle $ABC$ is a crucial condition. That fact raises a question: \emph{Is there any triangle $ABC$ that has the maximum side length larger or equal to $R\pi/2$ that does not have a balanced point?}

Here, we construct a simple example to show the answer ``yes" to the above question.
\begin{figure}[h!]
    \centering
    \includegraphics[width=0.3\linewidth]{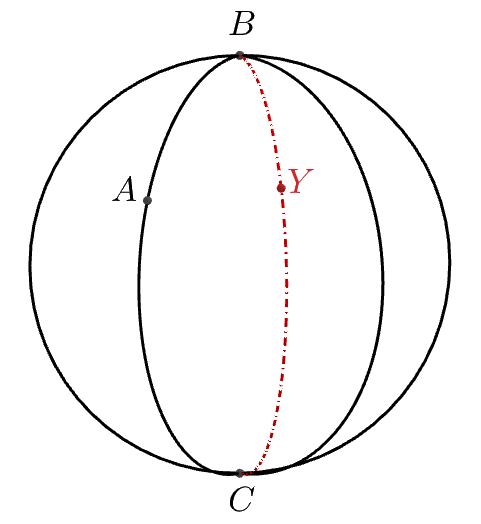}
    \caption{Example of a triangle having no balanced point}
    \label{fig:example_positive}
\end{figure}
\begin{Example}
    Let $M$ be a 2-D sphere with radius $R$. Let $A$ and $C$ be two distinct points on the sphere, and let $B$ be the antipodal point of $A$ (Figure \ref{fig:example_positive}). Then, the triangle $ABC$ does not have a balanced vertex $Y$ inside the triangle because for all points $Y$ inside $\triangle ABC$, the measure of $\angle BYA$ is always $\pi$, which cannot be $2\pi/3$.
\end{Example}

In addition, we show an example where the side lengths are greater than $R\pi/2$, and where there still exists a balanced point. 

\begin{figure}[h!]
    \centering
    \includegraphics[width=0.32\linewidth]{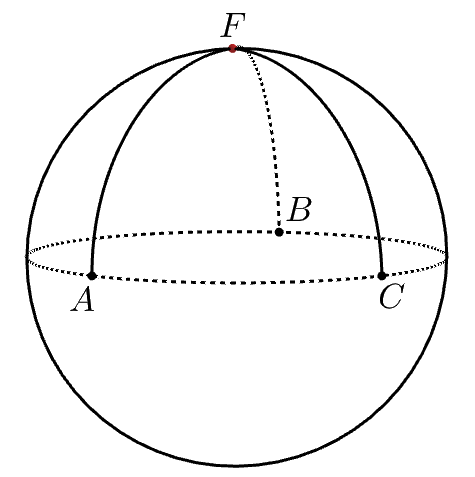}
    \caption{Example of a triangle with side lengths larger than $R\pi/2$ which has a balanced point.}
    \label{fig:example-higher-angle}
\end{figure}

\begin{Example} Let $M$ be a 2-D sphere with radius $R$. On the horizontal equator, take 3 points $A,B,C$ such that triangle $ABC$ is equilateral. Also, take $F$ as one of two poles of the sphere (Figure \ref{fig:example-higher-angle}). We have each side length of triangle $ABC$ is $\frac{2R\pi}{3} > \frac{R\pi}{2}$. Also, we have 
$$m(\angle AFB) = m(\angle BFC) = m(\angle CFA) = 2\pi/3.$$
That means $F$ is a balanced point of triangle $ABC$.
\end{Example}

\bibliographystyle{plain}
\bibliography{ref}

\end{document}